\documentclass[11pt]{amsart}

\usepackage[english]{babel}     % include this always
\usepackage[utf8]{inputenc}
\usepackage{amsmath}
\usepackage{amsfonts}
\usepackage{amssymb}
\usepackage{mathrsfs}
\usepackage{stmaryrd}
\usepackage{hyperref}
\usepackage{xcolor}
\usepackage{todonotes}
\usepackage{comment}
\usepackage{enumerate}
\usepackage{tikz-cd}

%%%%%%%%%%%%%%%%%%%%%%%
\textwidth 6.4truein
\textheight 8.5truein
%\addtolength{\topmargin}{0.5truein}
\oddsidemargin 0pt
\evensidemargin 0pt
\pagestyle{myheadings}
\overfullrule=0pt

\newtheorem{theorem}{Theorem}[section]

\newtheorem{lemma}[theorem]{Lemma}

\newtheorem{corollary}[theorem]{Corollary}

\theoremstyle{definition}
\newtheorem{definition}[theorem]{Definition}

\newtheorem{remark}[theorem]{Remark}

\theoremstyle{remark}
\numberwithin{equation}{section}

%%% Symbol for Lebesgue measure

\let\epsilon\varepsilon

\newcommand{\frakg}			{{\operatorname{\mathfrak{g}}}}
\newcommand{\fraka}			{{\operatorname{\mathfrak{a}}}}

\newcommand{\frakn}			{{\operatorname{\mathfrak{n}}}}

\DeclareMathOperator{\id}{id}

\newcommand{\OmegaA}{\Omega_K}

\newcommand{\Dom}{{\rm Dom}}
\newcommand{\SigmaPluss}{\Sigma^{++}}

\title[A Sobolev estimate for radial $L^p$-multipliers on a class of semi-simple Lie groups]{A Sobolev estimate for radial $L^p$-multipliers on a class of semi-simple Lie groups}

\date{\noindent \today.  MSC2010 keywords: 22D25, 22E30, 46L51.  MC is  supported by the NWO Vidi grant VI.Vidi.192.018 `Non-commutative harmonic analysis and rigidity of operator algebras'.}

\author[Martijn Caspers]{Martijn Caspers}

\address{TU Delft, EWI/DIAM,
	P.O.Box 5031,
	2600 GA Delft,
	The Netherlands}

\email{M.P.T.Caspers@tudelft.nl}
%\email{A.KrishnaswamyUsha@tudelft.nl}
%\email{G.M.Vos@tudelft.nl}

\begin{document}

\maketitle

\begin{abstract}
Let $G$ be a semi-simple Lie group in the Harish-Chandra class with maximal compact subgroup $K$. Let $\OmegaA$ be minus the radial Casimir operator.  Let $\frac{1}{4} \dim(G/K) < S_G < \frac{1}{2} \dim(G/K) , s \in (0,  S_G]$ and   $p \in (1,\infty)$ be   such that
\[
\left| \frac{1}{p} - \frac{1}{2} \right| < \frac{s}{2 S_G}.
\]
Then, there exists a constant $C_{G,s,p} >0$ such that for every $m \in L^\infty(G) \cap L^2(G)$ bi-$K$-invariant with $m \in \Dom(\OmegaA^s)$ and $\OmegaA^s(m) \in L^{2S_G/s}(G)$ we have,
\begin{equation}
\Vert T_m: L^p(\widehat{G}) \rightarrow  L^p(\widehat{G}) \Vert \leq C_{G, s,p} \Vert \OmegaA^s(m) \Vert_{L^{2S_G/s}(G)},
\end{equation}
where $T_m$ is the Fourier multiplier with symbol $m$ acting on the non-commutative $L^p$-space of the group von Neumann algebra of $G$. This gives new examples of $L^p$-Fourier multipliers with decay rates becoming slower when $p$ approximates $2$.
\end{abstract}

\section{Introduction}

 Finding sharp norm estimates of Fourier multipliers is a central and intriguingly delicate theme in Euclidean harmonic analysis. Nowadays several multiplier theorems are known including the celebrated H\"ormander-Mikhlin multiplier theorem that estimates the bound of an $L^p$-multiplier in terms of differentiability and regularity properties of the symbol. This is just one of the many theorems in harmonic analysis and we refer to the monographs \cite{GrafakosClassical}, \cite{GrafakosModern}, \cite{Stein} for a broader treatment of the subject.

Over the past decade there has been an increasing interest in the construction of $L^p$-multipliers on a non-abelian locally compact group $G$. Here the group plays the role of the frequency side. For a symbol $m \in L^\infty( G)$ the central question is for which $1 < p < \infty$  the Fourier multiplier
\[
T_m: L^2( \widehat{G} ) \rightarrow   L^2( \widehat{G} ): \lambda_G(f) \mapsto \lambda_G(mf),
\]
extends to a bounded map $L^p( \widehat{G} ) \rightarrow   L^p( \widehat{G} )$. Here $\lambda_G$ is the left regular representation and $L^p(\widehat{G})$ is the non-commutative $L^p$-space of the group von Neumann algebra $L^\infty(\widehat{G})$ of $G$. Moreover, ideally one would have sharp bounds on the norms of such multipliers and understand their regularity properties.

Recently, a number of such multiplier theorems have been obtained. In the realm of discrete groups Mei and Ricard \cite{MeiRicard} gave a free analogue of the Hilbert transform through Cotlar's identity yielding multipliers on free groups. The techniques were exploited further in \cite{MeiRicardXu}, \cite{GonzalezCotlarPaper} for free (amalgamated) products of groups and  groups acting on tree-like structures. In \cite{JungeMeiParcet} (see further \cite{GonzalezSpectralPaper}, \cite{JungeMeiParcetJEMS}) a very effective method based on cocycles was introduced to construct a wide class of multipliers. Some of the results we mentioned so far also yield multipliers for non-discrete groups.

In the case of semi-simple Lie groups the main achievements that have been made are contained in \cite{PRS} and \cite{Tablate}. In particular for ${\rm SL}(n, \mathbb{R})$, the group of $n \times n$-matrices over $\mathbb{R}$ with determinant 1, an analogue of the classical H\"ormander-Mikhlin multiplier theorem is obtained \cite{PRS}. The differentiability properties of the symbol are then described in terms of Lie derivatives. The methods from \cite{PRS}, \cite{Tablate} rely on a local transference from Schur multipliers and local approximations of the Lie group with Euclidean spaces.  The main theorem \cite[Theorem A]{PRS} is extremely effective and close to sharp for symbols that are supported on a small enough neighbourhood of the identity. Moreover, these H\"ormander-Mikhlin type conditions automatically imply certain integrability  of the symbol and therefore the theorem in fact extends from a small neighbourhood to symbols on the entire Lie group. Such symbols thus have a fast decay; fast enough to assure integrability properties of the symbol \cite[Remark 3.8]{PRS}.  Concerning the behavior of (radial) multipliers away from the identity the rigidity theorem \cite[Theorem B]{PRS} shows that symbols of $L^p$-multipliers in fact must necessarily have a sufficient amount of decay. The fundament of this phenomenon stems from  \cite{LafforgueDeLaSalle}, \cite{Laat}, \cite{DeLaat}. In the degree of decay there is a gap between the multiplier theorems  \cite[Theorem A]{PRS}, \cite[Theorem A2]{Tablate} and the rigidity theorem \cite[Theorem B]{PRS} in case $p$ is close to 2. The current paper provides a  new viewpoint on this gap.

We also mention that connections between discrete and locally compact groups have been made through noncommutative versions of De Leeuw theorems \cite{CPPR}, \cite{CJKM}.

% In view of this we also mention that it remains an open problem to construct concrete $L^p$-multipliers on ${\rm SL}(n, \mathbb{Z})$, especially radial multipliers with compact support or that decay to 0 at infinity.

\vspace{0.3cm}

The main result of this paper obtains a new multiplier theorem that is applicable to a natural class of semi-simple Lie groups. Our main result is the following theorem as announced in the abstract. The theorem provides a Sobolev type estimate for Fourier multipliers. The regularity properties are formulated in terms of the distance of $p$ from 2.

\begin{theorem} \label{Thm=MainIntro}
Let $G$ be a semi-simple Lie group in the Harish-Chandra class with maximal compact subgroup $K$. Let $\OmegaA$ be minus the radial Casimir operator.  Let $\frac{1}{4} \dim(G/K) < S_G < \frac{1}{2} \dim(G/K) , s \in (0,  S_G]$ and   $p \in (1,\infty)$ be   such that
\[
\left| \frac{1}{p} - \frac{1}{2} \right| < \frac{s}{2 S_G}.
\]
Then, there exists a constant $C_{G,s,p} >0$ such that for every $m \in L^\infty(G) \cap L^2(G)$ bi-$K$-invariant with $m \in \Dom(\OmegaA^s)$ and $\OmegaA^s(m) \in L^{2S_G/s}(G)$ we have,
\begin{equation}\label{Eqn=IntroMain}
\Vert T_m: L^p(\widehat{G}) \rightarrow  L^p(\widehat{G}) \Vert \leq C_{G, s,p} \Vert \OmegaA^s(m) \Vert_{L^{2S_G/s}(G)}.
\end{equation}
\end{theorem}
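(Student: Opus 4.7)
The plan is to reduce the theorem, via complex interpolation, to the single endpoint case $(s, q) = (S_G, 2)$ combined with the trivial $L^2$-isometry $\|T_m\|_{L^2 \to L^2} = \|m\|_{L^\infty(G)}$. Once the endpoint estimate
\[
\|T_m\|_{L^p \to L^p} \leq C_G\, \|\OmegaA^{S_G} m\|_{L^2(G)}
\]
has been established uniformly for every $p \in (1, \infty)$, interpolating against the trivial $L^2$-bound produces precisely the range $|1/p - 1/2| < s/(2S_G)$ for general $s \in (0, S_G]$, with the Sobolev symbol norm measured in $L^{2S_G/s}(G)$.

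For the endpoint I would invoke Harish-Chandra's spherical Plancherel inversion for bi-$K$-invariant $m$,
\[
m(g) = \int_{\fraka^*} \widehat{m}(\lambda)\, \phi_\lambda(g)\, d\mu_{Pl}(\lambda), \qquad d\mu_{Pl}(\lambda) = c_W |c(\lambda)|^{-2}\, d\lambda,
\]
where $\widehat m$ denotes the spherical transform. The structural point is that each elementary spherical function $\phi_\lambda$ with $\lambda \in \fraka^*$ is positive definite with $\phi_\lambda(e) = 1$ (being the $K$-fixed matrix coefficient of the spherical principal series), so $T_{\phi_\lambda}$ is a unital completely positive map on $\cL(G)$ and hence a contraction on $L^p(\widehat G)$ for every $p \in [1,\infty]$. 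Linearity of $m \mapsto T_m$ and the triangle inequality applied to the Bochner integral above then give
\[
\|T_m\|_{L^p \to L^p} \leq \int_{\fraka^*} |\widehat{m}(\lambda)|\, d\mu_{Pl}(\lambda)
\]
uniformly in $p$. Writing $\mu(\lambda) = |\lambda|^2 + |\rho|^2$ for the eigenvalue of $\OmegaA$ on $\phi_\lambda$, Cauchy-Schwarz together with the spherical Plancherel identity $\|\mu^{S_G}\widehat m\|_{L^2(d\mu_{Pl})} = \|\OmegaA^{S_G} m\|_{L^2(G)}$ yields
\[
\int |\widehat{m}|\, d\mu_{Pl} \leq \Bigl(\int \mu^{-2S_G}\, d\mu_{Pl}\Bigr)^{1/2} \|\OmegaA^{S_G} m\|_{L^2(G)},
\]
and the first factor is finite precisely when $4S_G > \dim(G/K)$, via the asymptotic $|c(\lambda)|^{-2} \sim |\lambda|^{\dim(G/K) - \rk(G/K)}$ at infinity.

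For the interpolation step I would apply Stein's theorem to the analytic family of symbols
\[
m_z = \OmegaA^{-S_G(1-z)}\bigl(|\OmegaA^s m|^{q(1-z)/2}\,\sign(\OmegaA^s m)\bigr),\qquad q = 2S_G/s,
\]
which is arranged so that $m_\theta = m$ at $\theta = 1 - s/S_G$. On $\Re z = 0$ the operator $\OmegaA^{S_G} m_{it}$ reduces to a pure imaginary power of $\OmegaA$ acting on an $L^2(G)$-function; this is $L^2$-unitary, giving uniform control in $t$ and feeding into the endpoint estimate at any chosen $p_0 \in (1, \infty)$. On $\Re z = 1$ the symbol $m_{1+it}$ is, up to an imaginary power of $\OmegaA$, $L^\infty$-bounded, so the trivial $L^2 \to L^2$ bound applies. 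Applying the three-lines lemma to $z \mapsto \tau(y\, T_{m_z}(x))$ for suitable $x, y$ produces the bound at $z = \theta$, and letting $p_0$ range over $(1, \infty)$ generates the full open range $|1/p - 1/2| < s/(2S_G)$ in the theorem.

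The principal obstacle is the rigorous control of the imaginary powers $\OmegaA^{it}$ appearing on the boundary $\Re z = 1$: while they are $L^2$-unitary, they are not $L^\infty$-bounded in general, so the analytic family must be modified or the $L^\infty$-endpoint replaced by a substitute in which imaginary powers enjoy the polynomial-in-$|t|$ growth required for Stein interpolation (for example, $\mathrm{BMO}$ or a shifted Sobolev scale). By contrast the endpoint step is clean: the combination of spherical Plancherel and positive-definiteness of bi-$K$-invariant spherical functions collapses it to Cauchy-Schwarz, with the condition $S_G > \dim(G/K)/4$ emerging naturally from the convergence of the resulting Plancherel integral.
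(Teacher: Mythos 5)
Your endpoint argument is correct, and it is genuinely different from the paper's route. The paper proves the $s=S_G$ case (Theorem \ref{Thm=Lp}) by factoring $T_m=(\varphi_{\OmegaA^{S_G}(m)}\otimes\id)\circ T_{S_G}$ with $T_{S_G}=(\OmegaA^{-S_G}\otimes 1)(\mathbb{E}_K\otimes\id)\Delta_{\widehat G}$ and interpolating the vector-valued bounds $L^p(\widehat G)\to L^2(\widehat{K\backslash G\slash K};L^p(\widehat G))$ via Kadison--Schwarz; you instead write $m=\int_{\fraka^\ast}\widehat m(\lambda)\varphi_\lambda\,d\mu_{Pl}(\lambda)$, use that each $\varphi_\lambda$ ($\lambda$ real) is positive definite with $\varphi_\lambda(e)=1$, so $T_{\varphi_\lambda}$ is a contraction on every $L^p(\widehat G)$, and conclude $\Vert T_m\Vert_{p\to p}\leq\Vert\widehat m\Vert_{L^1(d\mu_{Pl})}\leq\Vert\mu^{-S_G}\Vert_{L^2(d\mu_{Pl})}\Vert\OmegaA^{S_G}m\Vert_{L^2(G)}$. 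This is a legitimate alternative (it in effect places $m$ in the Fourier--Stieltjes algebra and even gives a bound uniform in $p\in[1,\infty]$), and the convergence condition $4S_G>\dim(G/K)$ you extract from the $c$-function asymptotics is exactly the paper's Corollary \ref{Cor=L2Case}/Remark 3.4; note $\Vert\mu^{-S_G}\Vert_{L^2(d\mu_{Pl})}=\Vert\OmegaA^{-S_G}\Vert_{L^2(\widehat{K\backslash G\slash K})}$, so the constants essentially agree.

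The interpolation step, however, has a genuine gap, and you flag it yourself without resolving it: on the boundary line where you invoke the trivial bound $\Vert T_{m_{1+it}}\Vert_{2\to2}=\Vert m_{1+it}\Vert_{L^\infty(G)}$, your symbol is $m_{1+it}=\OmegaA^{itS_G}(u_t)$ with $\Vert u_t\Vert_\infty\leq1$, and since imaginary powers of $\OmegaA$ are not bounded on $L^\infty$ you have no control of $\Vert m_{1+it}\Vert_{L^\infty(G)}$ at all, let alone with the admissible (sub-double-exponential) growth in $t$ needed for the three-lines lemma. This is not a removable technicality: it is precisely the step the paper's Section 5 is built to handle, and it is where the strict inequality $\vert\frac1p-\frac12\vert<\frac{s}{2S_G}$ comes from. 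The paper never places the pure $L^\infty$-symbol endpoint on a boundary line; instead it chooses $s_0\in(0,s)$ strictly positive (with $s=(1-\theta)s_0+\theta S_G$), so that on that line the symbol still carries a genuine negative power $\OmegaA^{-s_\beta}$, and then bounds $\Vert M_z\Vert_{L^\infty(G)}$ by factoring through $L^{q_\beta}$ with $q_\beta=2S_G/s_\beta$: Cowling's theorem \cite[Corollary 1]{CowlingAnnals} (applicable because the heat semigroup is submarkovian, Remark \ref{Rmk=CowlingApplies}) gives $\Vert\OmegaA^{it(s_0-s_1)}\Vert_{L^{q_\beta}\to L^{q_\beta}}\lesssim(1+\vert t\vert^3\log^2\vert t\vert)^{\vert\frac1{q_\beta}-\frac12\vert}$, and the Anker--Ji kernel estimate (Corollary \ref{Cor=CasimirBound}, $\kappa_{s_\beta}\in L^{q_\beta'}$ plus Young) gives $\OmegaA^{-s_\beta}:L^{q_\beta}\to L^\infty$. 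Your suggested substitutes (BMO or a shifted Sobolev scale on the symmetric space) are not developed and would require substantial extra work; without some such device your analytic family cannot be closed, so the proposal as written does not prove the theorem for $s<S_G$ (it does prove the $s=S_G$ case, where the stated range is all of $(1,\infty)$).
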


   There are several novelties in our approach compared to earlier multiplier theorems on non-abelian groups, in particular Lie groups. Firstly it is the first time that differentiability properties with respect to the Casimir operator are used in estimates on $L^p$-multipliers. As the Casimir operator equals the Laplace-Beltrami operator on the homogeneous space $G \slash K$ the estimate \eqref{Eqn=IntroMain} should be understood as a Sobolev norm estimate.
   Secondly, our proof uses the representation theory of $G$ to construct $L^p$-multipliers. In this case we use the spherical  dual of the group to construct bi-$K$-invariant multipliers. This makes a link to Harish-Chandra's Plancherel theorem for spherical functions.  Most notably, we establish a link between the construction of $L^p$-multipliers and Heat kernel estimates of the Casimir operator which are well-studied in the literature. For the Heat kernel estimates and consequent estimates for Bessel-Green-Riesz potentials we shall mostly rely ourselves on Anker-Ji \cite{AnkerJi} (see Section \ref{Sect=HeatKernel}); though in many cases such estimates were obtained earlier, see \cite{GangolliActa}, \cite{AnkerOldPaper}, or for ${\rm SL}(n, \mathbb{R})$, see \cite{Sawyer}.

    Theorem \ref{Thm=MainIntro} is very much in the spirit of the Calder\'on-Torchinsky theorem \cite{CT1}, \cite{CT2} in Euclidean analysis, see also the more recent papers \cite{GrafakosIMRN}, \cite{GrafakosCT}, but there are several fundamental differences too. For instance we do not have a Littlewood-Payley theory at our disposal and neither we are able to control the volumes of translations of areas.  Secondly, the classical results on Bessel potentials that are used in \cite{GrafakosCT} (see \cite[Section 6.1.2]{GrafakosModern}) need to be replaced by the much deeper results of Anker and Ji.

We shall show in Section \ref{Sect=Examples} that Theorem \ref{Thm=MainIntro} leads to several new classes of Fourier multipliers that are fundamentally beyond the reach of the earlier theorems from \cite{PRS}, \cite{Tablate}. For instance, the following question seems to be unknown. Suppose we have a symbol $m$ that is smooth in a neighbourhood of the origin of $G$ and of the form $m(k_1 \exp(H) k_2) = e^{-A_p \Vert H \Vert}, k_1, k_2 \in K, H \in \fraka$ outside that neighbourhood for some constant $A_p > 0$. Determine for which $A_p > 0$ and which $1 < p < \infty$ such a symbol can be an $L^p$-Fourier multipler. The rigidity theorem \cite[Theorem B]{PRS} puts a lower bound on $A_p$ whereas
 \cite[Remark 3.8]{PRS}, which is fundamental to \cite[Theorem A]{PRS}, yields  an upper bound on $A_p$. This paper improves on this upper bound as explained in Section \ref{Sect=Examples}, where we also explain that our theorem reaches beyond \cite[Theorem A2]{Tablate}. This gives a negative answer to the question at the end of \cite[Remark 4.2]{Tablate} if one is allowed to differentiate in $p$.

\vspace{0.3cm}

\noindent {\bf Structure of the paper.} Section \ref{Sect=Prelim} contains preliminaries on Lie groups, von Neumann algebras and non-commutative $L^p$-spaces. In Section \ref{Sect=HeatKernel} we prove an important direct consequence of the results of Anker-Ji \cite{AnkerJi}. Section \ref{Sect=Casimir} contains the core of this paper and provides a first estimate for $L^p$-multipliers. Section \ref{Sect=Interpolation}interpolates between $L^p$ and $L^2$ to conclude the main theorem. Finally Section \ref{Sect=Examples} contains important and surprising new examples of multipliers. We also specialize the case of ${\rm SL}(n, \mathbb{R})$.

\vspace{0.3cm}

\noindent {\bf Acknowledgements.} The author wishes to thank Jordy van Velthoven for several useful discussions and references to the literature.

\section{Preliminaries}\label{Sect=Prelim}
 For expressions  $A$ and $B$ we write  $A \approx B$ if there exists two absolute constants $c_1, c_2 > 0$ such that $c_1 A \leq B \leq c_2 A$. We write $A \preceq B$ if only $c_1 A \leq B$. The symbol $\otimes$ will denote the tensor product. In case we take a tensor product of von Neumann algebras we mean the von Neumann algebraic tensor product, i.e. the strong operator topology closure of their vector space tensor product. In case of a tensor product of $L^p$-spaces we mean the $L^p$-norm closure of that vector space tensor product.

\subsection{Lie groups and Lie algebras} For standard references on Lie groups and Lie algebras we refer to \cite{Helgason}, \cite{Knapp}, \cite{Humphreys} and for spherical functions to \cite{HelgasonSpherical} and \cite{JorgensonLang}.
 This paper crucially relies on \cite[Section 4]{AnkerJi} and therefore from this point onwards we assume that {\it $G$ is semi-simple and in the Harish-Chandra class}, see \cite{Knapp}. This includes all semi-simple, connected, linear Lie groups.
  In particular $G$ has finite center and finitely many connected components.  Let $K$ be a maximal compact subgroup of $G$ which is unique up to conjugation.
Let $G = KAN$ be the Iwasawa decomposition where $A$ is abelian and $N$ is nilpotent. Let $G = KAK$ be the Cartan decomposition of $G$. Let $\frakg$, $\fraka$ and $\frakn$ be the Lie algebras of respectively $G$,  $A$ and $N$.

We let $\Sigma \in \fraka^\ast$ be the set of roots and by fixing a positive Weyl chamber $\fraka^+$ we let $\Sigma^+$ be the set of positive roots.  We let $\SigmaPluss$ be the set of positive indivisible roots; recall that this means that $\alpha \in \SigmaPluss$ if $\alpha$ is a positive root and $\frac{1}{2} \alpha$ is not a root.
 If $G$ is complex then $\Sigma^+ = \SigmaPluss$ (see \cite[Section II.8.4]{Humphreys}); otherwise if $\alpha \in \SigmaPluss$ the only scalar multiples of $\alpha$ that are possibly roots are $-2 \alpha, -\alpha, \alpha$, and $2 \alpha$ (see \cite[Exercise III.9]{Humphreys}) .
 Let $m_\alpha$ be the multiplicity of a root.
  Recall that $\dim(N) = \sum_{\alpha \in \Sigma^+} m_\alpha$.
   %= \sum_{\alpha \in \SigmaPluss} m_\alpha + m_{2 \alpha}$.
    We also set $\rho = \frac{1}{2} \sum_{\alpha \in \Sigma^+} m_\alpha \alpha$.

\subsection{Killing form}
Let $\langle \cdot, \cdot \rangle$ be the  Killing form on $\frakg$ which is a non-degenerate bilinear form. The Killing form restricts to a positive definite non-degenerate form on $\fraka$. For $H \in \fraka$ we set $\Vert H \Vert = \langle H, H \rangle^{\frac{1}{2}}$. The Killing form linearly identifies the dual $\fraka^\ast$ with $\fraka$ by identifying $H \in \fraka$ with $\alpha_H( \: \cdot \:) := \langle H, \cdot \rangle \in \fraka^\ast$. Under this identification the pairing $\langle \: \cdot \:, \: \cdot \: \rangle$ and corresponding norm are thus defined on $\fraka^\ast$ as well; this also defines the pairing between $\fraka$ and $\fraka^\ast$.
%  we define the pairing $\langle  \alpha_{H_1},  \alpha_{H_2} \rangle = \langle H_1, H_2 \rangle$ on $\fraka^\ast$ which thus is still denoted by  $\langle \cdot, \cdot \rangle$. This inner product determines a norm on $\fraka^\ast$ that is denoted again by $\Vert \alpha \Vert = \langle \alpha, \alpha \rangle^{\frac{1}{2}}$.

\subsection{Haar measure}
 We denote $\mu_G$ for the Haar meausure of $G$ which is both left- and right-invariant as $G$ is semi-simple hence unimodular.
  The Haar measure decomposes with respect to the Cartan decomposition (see \cite[Theorem I.5.8]{Helgason} or \cite[Eqn. (2.1.5)]{AnkerJi}) as
 \begin{equation}\label{Eqn=CartanIntegral}
 \int_G f(g) d\mu_G(g) =  \vert K \slash M \vert   \int_{K} \int_{\fraka^+} \int_K f(k_1 \exp(H) k_2)  \delta(H) dk_1 dH dk_2,
 \end{equation}
 where $M$ is group of elements in $K$ that commute with $A$ (i.e. the centralizer) and
 \begin{equation}\label{Eqn=CartanIntegral2}
 \delta(H) = \prod_{\alpha \in \Sigma^+}  \sinh^{m_\alpha} \langle \alpha, H \rangle   \approx \prod_{\alpha \in \Sigma^+}  \left(   \frac{ \langle \alpha, H \rangle   }{   1 + \langle \alpha, H \rangle }  \right)^{m_\alpha}  e^{2 \langle \rho, H \rangle}.
 \end{equation}
 The volume of the quotient $ \vert K \slash M \vert$ will not play a very significant role in our paper and can be regarded as a constant.
 We let $(f_1 \ast f_2)(g) = \int_G f_1(h) f_2(h^{-1} g) d\mu_G(h)$ be the convolution product for suitable $\mathbb{C}$-valued functions $f_1$ and $f_2$ on $G$.

\subsection{Function spaces and $K$-invariance}
Let $F$ be either  $C^\infty, C^\infty_c, C, L^p$ so that we mean by  $F(G)$ either $C^\infty(G),  C^\infty_c(G), C(G),  L^p(G)$ which are respectively the functions $G \rightarrow \mathbb{C}$ that are smooth, smooth with compact support, continuous and $p$-integrable with respect to $\mu_G$.
We use the notation $F(K \backslash G), F(G \slash K), F(K \backslash G \slash K)$ to denote the space of functions in $F(G)$ that are $K$-invariant from the left, right or both left and right respectively.

\subsection{Casimir operator}\label{Sect=Casimir}

 Let $\Omega \in U(\frakg)$ be the Casimir element of $G$ where $U(\frakg)$ is the universal enveloping algebra of $\frakg$. $\Omega$ acts as a second order differential operator on  $C^\infty(G)$ (see \cite[Section II.1]{JorgensonLang}) and we let $\OmegaA^0$ be the restriction of $-\Omega$ to $C^\infty(K \backslash G \slash K)$. On the domain
 \[
 \Dom(\OmegaA^0) = \{ f \in C^\infty( K \backslash  G  \slash K) \cap L^2(  K \backslash  G \slash K) \mid \OmegaA(f) \in L^2(   K \backslash G \slash K) \},
 \]
 the operator $\OmegaA^0$ is essentially self-adjoint; in fact its restriction to the Harish-Chandra Schwartz space  is well-known to be essentially self-adjoint. So the closure $\OmegaA$ of $\OmegaA^0$ is an unbounded self-adjoint operator on $L^2( K \backslash G \slash K )$. Then   $\OmegaA$ is {\it positive} and the spectrum of $\OmegaA$ is the interval $[ \langle \rho, \rho \rangle, \infty)$. We may therefore consider fractional powers $\OmegaA^s$ with $s \in \mathbb{R}$; these fractional powers are bounded operators in case $s \leq 0$.

\subsection{Von Neumann algebras} We denote $B(H)$ for the bounded operators on a Hilbert space $H$ and for $B \subseteq B(H)$ we set $B'= \{ x \in B(H) \mid \forall b \in B:  xb = bx \}$ for the commutant. For von Neumann algebras we refer to \cite{Tak1} as a standard work and for non-commutative $L^p$-spaces to \cite{Nelson}, \cite{PisierXu}.

Let $M$ be a semi-finite von Neumann algebra with normal semi-finite faithful trace $\tau$. By $L^p(M, \tau), 1 \leq p < \infty$ we denote the Banach space consisting of all closed densely defined operators $x$ affiliated with $M$ such that $\Vert x \Vert := \tau( \vert x \vert^p  )^{1/p} < \infty$.  The set $M \cap L^p(M, \tau)$ is dense in $L^p(M, \tau)$; so that alternatively $L^p(M, \tau)$ can be defined as the abstract completion of this intersection. We set $L^\infty(M, \tau) = M$.

\subsection{Group von Neumann algebras} Let $G$ be a locally compact unimodular group; in particular any semi-simple Lie group. We let $(\lambda_G(g) f)( h) = f(g^{-1} h)$ and $(\lambda_G'(g) f)( h) = f(hg), g,h \in G$ be the left- and right regular representation of $G$ on $L^2(G)$. Let $L^\infty(\widehat{G}) = \{ \lambda_G(g) \mid   g \in G \}''$ (double commutant of the set) be the left group von Neumann algebra of $G$. For $f \in L^1(G)$ set $\lambda_G(f) = \int_G f(g) \lambda_G(g) d\mu_G(g) \in L^\infty(\widehat{G})$. $L^\infty(\widehat{G})$ can be equipped with the Plancherel trace $\tau_{\widehat{G}}: L^\infty(\widehat{G})^+ \rightarrow [0, \infty]$ given by
\[
\tau_{\widehat{G}}(x^\ast x) = \Vert f \Vert_2^2,
\]
in case there is $f \in L^2(G)$ such that $x h = f \ast h$ for all $h \in C_c(G)$. We set  $\tau_{\widehat{G}}(x^\ast x) = \infty$ otherwise. Briefly set $L^p(\widehat{G}) = L^p( L^\infty(\widehat{G}), \tau_{\widehat{G}})$.
 Then, by definition,   $\lambda_G$ extends to an isometry $L^2(G) \rightarrow L^2(\widehat{G})$ (Plancherel identity).

 There exists a normal $\ast$-homomorphism called the comultiplication
 \[
 \Delta_{\widehat{G}}: L^\infty(\widehat{G}) \rightarrow L^\infty(\widehat{G}) \otimes L^\infty(\widehat{G}),
 \]
 that is determined by $\Delta_{\widehat{G}}(\lambda_G(g)) = \lambda_G(g) \otimes \lambda_G(g), g \in G$.
 %It satisfies the left- and right- invariance property
 %\[
 %  (\tau_{\widehat{G}} \otimes \omega) \circ  \Delta_{\widehat{G}}(x) = \omega(x) 1 =    (\omega \otimes \tau_{\widehat{G}}) \circ  \Delta_{\widehat{G}}(x), \qquad x \in %L^\infty(\widehat{G})^+.
%\]

\subsection{Fourier multipliers} Let $1 < p < \infty$.  We call a function $m \in L^\infty(G)$ an $L^p$-Fourier multiplier if there is a bounded map $T_m: L^p(\widehat{G}) \rightarrow L^p(\widehat{G})$ that is determined by $\lambda_G(f) \mapsto \lambda_G(mf)$ for $f \in C_c(G) \ast C_c(G)$.

\subsection{Vector-valued $L^2$-spaces}\label{Sect=VecVal} Let $(X, \mu_X)$ be a regular Borel measure space and $\mathcal{X}$ be a Banach space. We write  $L^2(X; \mathcal{X})$ for the Banach space of locally Bochner integrable functions $f: X \rightarrow \mathcal{X}$ such that $\Vert f \Vert_{L^2(X; \mathcal{X})}  := \int_X \Vert f(x) \Vert_{\mathcal{X}}^2 d\mu(x)^{\frac{1}{2}} < \infty$.

\begin{remark}\label{Rmk=VectorFunctional}
Let $\varphi \in  L^2(X, \mu_X)$.  Then the map $(\varphi \otimes {\rm id}): L^2(X; \mathcal{X}) \rightarrow \mathcal{X}: f \mapsto \int_X f(x) \varphi(x) d\mu_X(x)$ is bounded with the same norm as $\varphi$ as easily follows from the Cauchy-Schwartz inequality.
\end{remark}

\section{Kernel estimates}\label{Sect=HeatKernel}
In \cite{AnkerJi} Anker and Ji showed that the fractional powers of the Casimir operator are represented by convolution kernels and they determined their asymptotic behavior. Consequently we can easily determine when these kernel are contained in  $L^p(G)$. These results from \cite{AnkerJi} follow from rather deep estimates on the Heat kernel associated with the Casimir operator that were conjectured in  \cite{AnkerOldPaper} and proved in several special cases before \cite{AnkerJi}.
We give some precise definitions borrowing some notation from \cite{AnkerJi}; but what really matters for us is \eqref{Eqn=Conv}. Define the Heat kernel, for $g \in G, t >0$,
\[
k_t(g) = \frac{C_{SF}}{ \vert W \vert} \int_{\fraka} \vert c(\lambda) \vert^{-2} e^{-t ( \Vert \lambda \Vert^2 + \Vert \rho \Vert^2 ) } \varphi_\lambda(g) d\lambda.
\]
Here $W$ is the Weyl group, $C_{SF} > 0$ a normalisation constant in the Spherical Fourier transform, $c$ the Harish-Chandra $c$-function and $\varphi_\lambda$ the spherical function indexed by $\lambda \in \fraka$ as in \cite{AnkerJi}.
Then  for $s >0$ we set the Bessel-Green-Riesz kernel using the $\Gamma$-function identity
\[
\kappa_s := \Gamma(s)^{-1} \int_0^\infty t^{s-1} k_t  dt.
\]
 These kernels are bi-$K$-invariant, as so is the spherical function $\varphi_\lambda$, and satisfy
\begin{equation}\label{Eqn=Conv}
e^{  - t \OmegaA} f = k_t \ast f, \qquad    \OmegaA^{-s} f = \kappa_s \ast f      \qquad f \in C_c(K \backslash G \slash K).
\end{equation}
 The following theorem is  essentially proved in \cite{AnkerJi}.

\begin{theorem}\label{Thm=AnkerJi}
For $0 < 2s <  \dim(G \slash K) $ and
\[
1 < q <  \frac{ \dim(  G/K )   }{ \dim(  G/K  )  - 2s},
\]
with moreover $q \leq 2$ we have that $\kappa_s$  is contained in $L^{q}( K \backslash G \slash K)$.
\end{theorem}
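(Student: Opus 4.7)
The plan is to apply the Cartan decomposition of the Haar measure from \eqref{Eqn=CartanIntegral}--\eqref{Eqn=CartanIntegral2} to reduce $\|\kappa_s\|_{L^q(G)}^q$ to an integral over the positive Weyl chamber $\fraka^+$, and then split the result into a local piece near the origin and a global piece away from it. On each piece one invokes the pointwise Anker-Ji asymptotics for the Bessel-Green-Riesz kernel $\kappa_s$. Since $\kappa_s$ is bi-$K$-invariant,
\[
\|\kappa_s\|_{L^q(G)}^q \approx \int_{\fraka^+} |\kappa_s(\exp H)|^q \, \delta(H) \, dH,
\]
and I would break this into $\|H\| \leq 1$ and $\|H\| > 1$.

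For the local piece, Anker-Ji supply the Euclidean-type asymptotic $\kappa_s(\exp H) \asymp \|H\|^{2s - \dim(G/K)}$ as $H \to 0$, which is meaningful precisely because $2s < \dim(G/K)$. On the same region the Haar density satisfies $\delta(H) \approx \prod_{\alpha \in \Sigma^+} \langle \alpha, H \rangle^{m_\alpha}$, so after passing to polar coordinates on $\fraka$ and using $\dim \fraka + \sum_{\alpha \in \Sigma^+} m_\alpha = \dim(G/K)$, the problem collapses to a standard Euclidean Riesz-potential integrability check. Convergence of the local piece then holds iff $q(\dim(G/K) - 2s) < \dim(G/K)$, i.e.\ iff $q < \dim(G/K)/(\dim(G/K) - 2s)$; this is the first hypothesis of the theorem.

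For the global piece, the Anker-Ji estimates give a bound of the shape $|\kappa_s(\exp H)| \preceq P_s(H) \, e^{-\langle \rho, H \rangle}$ for $\|H\| > 1$, with $P_s$ an explicit rational-polynomial factor in the root pairings $\langle \alpha, H\rangle$ depending on $s$. Pairing this with the global form $\delta(H) \asymp \prod_{\alpha \in \Sigma^+} \bigl(\langle \alpha, H\rangle / (1 + \langle\alpha,H\rangle)\bigr)^{m_\alpha} e^{2\langle \rho, H\rangle}$, the integrand $|\kappa_s|^q \delta$ becomes a rational factor in $H$ multiplied by $e^{(2-q)\langle \rho, H\rangle}$. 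For $q < 2$ the exponential decay alone secures integrability over $\fraka^+ \cap \{\|H\| > 1\}$; the constraint $q \leq 2$ is exactly what prevents this exponential from blowing up.

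The main obstacle is the borderline $q = 2$, where the exponential factors cancel completely and integrability has to be extracted entirely from the remaining rational factors. Here one uses that at $q = 2$ the local condition $q < \dim(G/K)/(\dim(G/K) - 2s)$ forces $s > \dim(G/K)/4$, which in turn sharpens the polynomial decay in $P_s$ enough to overpower the polynomial growth of the Haar density on the walls and at infinity in $\fraka^+$; verifying this cleanly (uniformly in how $H$ approaches the walls of the chamber) is the delicate part of the argument and is where one genuinely leans on the full strength of the Anker-Ji kernel estimates, rather than just their leading exponents.
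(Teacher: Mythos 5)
Your local analysis near the origin is exactly the paper's: the Anker--Ji asymptotic $\vert\kappa_s(\exp H)\vert \approx \Vert H\Vert^{2s-\dim(G/K)}$ together with the local behaviour of $\delta(H)$ gives precisely the condition $q < \dim(G/K)/(\dim(G/K)-2s)$. The gap is in the global piece. The bound you quote there, $\vert \kappa_s(\exp H)\vert \preceq P_s(H)\,e^{-\langle\rho,H\rangle}$, is weaker than what Anker--Ji actually provide: away from the origin one has
\[
\vert \kappa_s(\exp H)\vert \preceq \Vert H \Vert^{\,s-\frac{l+1}{2}-\vert\SigmaPluss\vert}\;
e^{-\Vert\rho\Vert\,\Vert H\Vert-\langle\rho,H\rangle}\prod_{\alpha\in\SigmaPluss}\bigl(1+\langle\alpha,H\rangle\bigr),
\]
i.e.\ there is an extra factor $e^{-\Vert\rho\Vert\Vert H\Vert}$ beyond the decay $e^{-\langle\rho,H\rangle}$ of the ground spherical function. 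With your weaker bound the computation does not close: pairing $e^{-q\langle\rho,H\rangle}$ with $\delta(H)\approx(\cdots)e^{2\langle\rho,H\rangle}$ leaves $e^{(2-q)\langle\rho,H\rangle}$, which for $1<q<2$ is exponential \emph{growth} on the interior of $\fraka^+$ (your sentence calling this ``exponential decay'' has the sign backwards), and for $q=2$ leaves no exponential at all, so the integrand is comparable to a growing polynomial (the factor $\prod_{\alpha}(1+\langle\alpha,H\rangle)$ grows; taking $s>\frac14\dim(G/K)$ makes the power $\Vert H\Vert^{\,s-\cdots}$ \emph{larger}, not smaller, so it cannot rescue the wall/infinity behaviour as you suggest). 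In short, neither the $1<q<2$ case nor the endpoint $q=2$ can be salvaged from the bound you wrote down.

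The paper's proof uses the stronger estimate above: the exponent in $\vert\kappa_s\vert^q\delta$ is
$-q(\Vert\rho\Vert\Vert H\Vert+\langle\rho,H\rangle)+2\langle\rho,H\rangle$, and since $q\le 2$ one may bound $(2-q)\langle\rho,H\rangle\le(2-q)\Vert\rho\Vert\Vert H\Vert$ to get the exponent $\le(2-2q)\Vert\rho\Vert\Vert H\Vert$, which is strictly negative for every $q>1$, including $q=2$. Thus the global piece is killed by uniform exponential decay (polynomial factors are harmless), the hypothesis $q\le 2$ enters only through this Cauchy--Schwarz step, and no delicate polynomial bookkeeping near the walls of the chamber is needed. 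To repair your argument, replace your global kernel bound by the full Anker--Ji estimate (their Theorem 4.2.2 with Proposition 2.2.12 or Remark 4.2.3(i)) and redo the exponent computation as above.
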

\begin{proof}
By the integral decomposition \eqref{Eqn=CartanIntegral} it suffices to show that $\kappa_s^{\fraka} := \kappa_s \circ \exp\vert_{\fraka^+}$ is contained in $L^{q}( \fraka^+, \delta(H) dH)$. We first consider the behavior of $\kappa_s^{\fraka}$ on  $B_{\geq 1}^+ := \{ H \in \fraka^+ \mid \Vert H \Vert \geq 1 \}$, i.e. away from the origin.
By \cite[Theorem 4.2.2]{AnkerJi} combined with \cite[Proposition  2.2.12 or Remark 4.2.3.(i)]{AnkerJi}   for $H \in B_{\geq 1}^+$,
\[
\begin{split}
\vert \kappa_s^{\fraka}( H ) \vert \preceq &  \Vert H \Vert^{s - \frac{ l + 1}{2} - \vert \SigmaPluss \vert}   e^{-  \Vert \rho \Vert   \Vert H \Vert - \langle \rho, H \rangle } \prod_{\alpha \in \SigmaPluss}  \left(   1 + \langle \alpha, H \rangle \right). \\
\end{split}
\]
 Recall the asymptotic behavior of the Haar measure $\delta(H) dH$ from \eqref{Eqn=CartanIntegral2}.
%For the Haar measure $\mu_\fraka$ we have (see \cite[Theorem I.5.8]{Helgason} or \cite[Eqn. (2.1.5)]{AnkerJi}), for $H \in \fraka^+$,
%\begin{equation}\label{Eqn=HaarPolar}
%\mu_\fraka(H) = \prod_{\alpha \in \Sigma^+}  \sinh^{m_\alpha} \langle \alpha, H \rangle   \approx \prod_{\alpha \in \Sigma^+}  \left(   \frac{ \langle \alpha, H \rangle   }{   1 + \langle \alpha, H \rangle }  \right)^{m_\alpha}  e^{2 \langle \rho, H \rangle} dH.
%\end{equation}
It follows that there exists a polynomial $P$ in $H$ such that
\begin{equation}\label{Eqn=AwayFromZero}
\int_{B_{\geq 1}^+} \vert \kappa_s^{\fraka}( H ) \vert^q  \delta(H) d H  \leq
\int_{B_{\geq 1}^+}
 e^{-q (  \Vert \rho \Vert   \Vert H \Vert + \langle \rho, H \rangle ) + 2 \langle \rho, H \rangle } P( \Vert H \Vert )
dH.
\end{equation}
As long as $q \leq 2$ we have for the exponent of the exponential function in the latter expression that
\[
-q (  \Vert \rho \Vert   \Vert H \Vert + \langle \rho, H \rangle ) + 2 \langle \rho, H \rangle
\leq (2 - 2q) \Vert \rho \Vert \Vert H \Vert.
\]
And therefore as long as $1 < q \leq 2$ we have that the integral \eqref{Eqn=AwayFromZero} is finite.

Next we consider the behavior of  $\kappa_s^{\fraka}$  on the region  $B_{\leq 1}^+ := \{ H \in \fraka^+ \mid \Vert H \Vert \leq 1 \}$ in order to have $\kappa_s^\fraka \in L^{q}( B_{\leq 1}^+ , \mu_A)$. By \cite[Remark 4.2.3.(iii)]{AnkerJi}  for $H \in B_{\leq 1}^+$ we have
\[
  \vert \kappa_s(\exp(H)) \vert \approx  \Vert H \Vert^{2s - \dim(G \slash K)}, \qquad 0 < 2 s <  \dim(G \slash K).
\]
It therefore follows, using again the expression of the Haar measure \eqref{Eqn=CartanIntegral}, that $\kappa_s$ is in $L^q(B_{\leq 1}, d\mu_{\fraka})$ if and only if
\[
 q (2s - \dim(G \slash K)) + \dim(N) > - \dim(A).
\]
This is equivalent to $q < \frac{\dim(G \slash K) }{ \dim(G \slash K) - 2s}$.
\end{proof}

Recall that $\OmegaA^{-s}$ with $s >0$ is a bounded operator on $L^2(K \backslash G \slash K)$. The next corollary concerns its extension to $L^p$-spaces of bi-$K$-invariant functions.

\begin{corollary}\label{Cor=CasimirBound}
For every $0 < 2s <  \dim(G \slash K)$ and  $\frac{\dim(G \slash K)}{ 2 s } < p < \infty$ with moreover $2 \leq p$, we have
\begin{equation}\label{Eqn=YoungPlus}
\Vert  \OmegaA^{-s}: L^{p}(K \backslash G \slash K ) \rightarrow  L^{\infty}(K \backslash G \slash K ) \Vert < \infty.
\end{equation}
\end{corollary}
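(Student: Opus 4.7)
The plan is to realize $\OmegaA^{-s}$ as convolution with the Bessel-Green-Riesz kernel $\kappa_s$ and then apply Young's convolution inequality on the unimodular group $G$. Concretely, for $f \in C_c(K \backslash G \slash K)$, formula \eqref{Eqn=Conv} gives $\OmegaA^{-s} f = \kappa_s \ast f$, and since $G$ is unimodular, Young's inequality yields
\[
\Vert \kappa_s \ast f \Vert_{L^\infty(G)} \leq \Vert \kappa_s \Vert_{L^{p'}(G)} \, \Vert f \Vert_{L^p(G)},
\]
where $p'$ is the conjugate exponent of $p$. It therefore suffices to verify that $\kappa_s \in L^{p'}(K \backslash G \slash K)$ under the stated hypotheses on $p$ and $s$, which is precisely the content of Theorem \ref{Thm=AnkerJi} applied with $q = p'$.

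For the parameter matching, write $n = \dim(G \slash K)$ and $q = p/(p-1)$. The assumption $p \geq 2$ translates into $q \leq 2$, and $p > n/(2s)$ rearranges, via $p(n - 2s) < (p-1)n$, to $q < n/(n-2s)$. Since $p < \infty$ forces $q > 1$, all three hypotheses of Theorem \ref{Thm=AnkerJi} are satisfied, so $\Vert \kappa_s \Vert_{L^{p'}} < \infty$ with a constant depending only on $G$, $s$, and $p$.

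The extension from $C_c(K \backslash G \slash K)$ to all of $L^p(K \backslash G \slash K)$ is then immediate from density together with the uniform estimate, giving exactly \eqref{Eqn=YoungPlus}. There is essentially no serious obstacle in this proof; the heavy lifting has already been performed in Theorem \ref{Thm=AnkerJi} (which itself rests on the deep Heat kernel and Bessel-Green-Riesz estimates of Anker-Ji), and the corollary is just a clean Hölder/Young duality on top of it. The only mild point to be aware of is that the restrictions $p \geq 2$ and $p > n/(2s)$ correspond respectively to $q \leq 2$ and $q < n/(n-2s)$; the critical case $p = n/(2s)$ is correctly excluded, matching the strict inequality in Theorem \ref{Thm=AnkerJi}.
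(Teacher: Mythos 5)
Your proposal is correct and follows essentially the same route as the paper: identify $\OmegaA^{-s}$ with convolution against $\kappa_s$, apply Young's inequality, and invoke Theorem \ref{Thm=AnkerJi} with $q$ equal to the conjugate exponent of $p$. Your explicit check that $p\geq 2$, $p>\dim(G/K)/(2s)$ and $p<\infty$ translate into $q\leq 2$, $q<\dim(G/K)/(\dim(G/K)-2s)$ and $q>1$ is exactly the parameter matching implicit in the paper's argument.
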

\begin{proof}
By Theorem \ref{Thm=AnkerJi} the operator $\OmegaA^{-s}$ is a convolution operator with kernel $\kappa_s \in L^q(K \backslash G \slash K)$ where $\frac{1}{p} + \frac{1}{q} = 1$.
%We first note that convolution with $\kappa_s$ preserves bi-$K$-invariance.
By Young's inequality for convolutions,
\[
\Vert \kappa_s \ast f \Vert_{ L^{\infty}( G  )} \leq
\Vert \kappa_s  \Vert_{ L^{q}(  G   ) }  \Vert f  \Vert_{ L^{p}(  G   ) },
\]
for any $f \in L^{p}(  G   )$. So the corollary follows from Theorem \ref{Thm=AnkerJi}.
\end{proof}

The following corollary is a special case of Theorem \ref{Thm=AnkerJi} in case  moreover $s < \frac{1}{2} \dim(G \slash K)$; this extra  condition is however not needed by Remark \ref{Cor=AssymptoticC}.

\begin{corollary}\label{Cor=L2Case}
For $s > \frac{1}{4} \dim(G \slash K)$ we have that $\kappa_s  \in L^{2}(  K \backslash G \slash K   )$.
% and  therefore $\OmegaA^{-s}  \in L^{2}(  \widehat{ K \backslash G \slash K }  )$.
\end{corollary}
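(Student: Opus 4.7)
The plan is to reduce the statement directly to Theorem \ref{Thm=AnkerJi} at the endpoint exponent $q=2$. The auxiliary bound $q\le 2$ in that theorem is satisfied for free, so the only nontrivial condition to verify is $q < \frac{\dim(G/K)}{\dim(G/K)-2s}$. Provided $s < \frac{1}{2}\dim(G/K)$, the denominator is positive and, after multiplying out, the inequality $2 < \frac{\dim(G/K)}{\dim(G/K)-2s}$ is equivalent to $2(\dim(G/K)-2s) < \dim(G/K)$, i.e.\ $4s > \dim(G/K)$. This is exactly the hypothesis $s > \frac{1}{4}\dim(G/K)$, so Theorem \ref{Thm=AnkerJi} immediately gives $\kappa_s \in L^2(K\backslash G\slash K)$ in this range.

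For the remaining range $s \ge \frac{1}{2}\dim(G/K)$ the claim is even easier and only requires reusing the two ingredients in the proof of Theorem \ref{Thm=AnkerJi}. Near the origin the pointwise asymptotic $|\kappa_s(\exp H)| \approx \|H\|^{2s-\dim(G/K)}$ of \cite[Remark 4.2.3.(iii)]{AnkerJi} is now bounded (rather than singular), so $\kappa_s^{\fraka}$ is square-integrable on $B_{\le 1}^+$ against the local Cartan density. Away from the origin, the estimate of \cite[Theorem 4.2.2]{AnkerJi} on $B_{\ge 1}^+$ combined with \eqref{Eqn=CartanIntegral2} yields exactly the exponent $-q\|\rho\|\|H\| + (2-q)\langle\rho,H\rangle$ computed in the proof of Theorem \ref{Thm=AnkerJi}, which is strictly negative for $q = 2$ since $(2-q) = 0$ there, making the $L^2$-integral against $\delta(H)\,dH$ finite.

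There is no genuine obstacle: the corollary is a boundary/endpoint specialisation of the preceding theorem, and the mild subtlety is purely bookkeeping—checking that the two inequalities in Theorem \ref{Thm=AnkerJi} collapse to the single hypothesis $s>\tfrac14\dim(G/K)$ at $q=2$, and separately noting that the argument extends to arbitrarily large $s$ because increasing $s$ only makes $\kappa_s$ more regular at the origin while preserving the exponential decay at infinity.
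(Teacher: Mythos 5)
Your argument for the range $\tfrac14\dim(G\slash K)<s<\tfrac12\dim(G\slash K)$ is exactly the paper's proof: the corollary is obtained by specializing Theorem \ref{Thm=AnkerJi} to $q=2$, and your bookkeeping showing that the two conditions collapse to $s>\tfrac14\dim(G\slash K)$ is correct. Your extra discussion of $s\ge\tfrac12\dim(G\slash K)$ is a welcome addition, since the theorem's hypothesis $2s<\dim(G\slash K)$ means that range is not literally a ``special case'' (the paper glosses over this); your treatment is sound in substance, with one small caveat: at the endpoint $2s=\dim(G\slash K)$ the local asymptotic $\Vert H\Vert^{2s-\dim(G\slash K)}$ quoted from \cite[Remark 4.2.3.(iii)]{AnkerJi} does not apply verbatim (the behaviour there is logarithmic rather than bounded), though this is still locally square-integrable against $\delta(H)\,dH$, so the conclusion stands. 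Alternatively, the large-$s$ case follows at once from the small-$s$ case by writing $\kappa_{s'}=\kappa_{s'-s}\ast\kappa_s=\OmegaA^{-(s'-s)}\kappa_s$ and using that $\OmegaA^{-(s'-s)}$ is bounded on $L^2(K\backslash G\slash K)$, or from the spherical Plancherel argument indicated in the remark following the corollary.
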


\begin{remark}\label{Cor=AssymptoticC}
Corollary \ref{Cor=L2Case} does not necessarily require the results from \cite{AnkerJi} but  may also be derived more directly from the asymptotic behavior of the Harish-Chandra $c$-function as obtained in \cite[Eqn (3.44)]{Duistermaat}  by showing that the spherical Fourier transform of $\kappa_s$, given by $\widehat{\kappa_s}(\lambda) = (\Vert \rho \Vert^2 + \Vert \lambda\Vert^2)^{-s}, \lambda \in \fraka$, is in $L^2(\fraka, c^{-2}(\lambda) d\lambda)$.
% Of course such an argument is still rather deep  as it relies on the explicit formulae for the $c$-function as found by Gindikin and Karpelevi\u{c} \cite{Gindikin}.
 \end{remark}

\begin{remark}\label{Rmk=CowlingApplies}
As observed in   \cite[Proposition 3.1, Eqn. (3.12)]{GangolliActa} we have that the Heat kernel $k_t$ is the fundamental solution of the parabolic differential equation $-\OmegaA u  = \frac{\partial}{\partial t} u$ and as such $\Vert k_t  \Vert_{   L^1(K \backslash G \slash K) } = 1$ for $t >0$ \cite{Ito}; note that   \cite{GangolliActa} assumes $G$ is complex but here it is not relevant.  Therefore by Young's inequality
\[
 \Vert k_t \ast f \Vert_{L^p(G)} \leq \Vert f \Vert_{L^p(G)} \qquad f \in L^p(G), 1 \leq p \leq \infty,  t > 0.
\]
We already noted that $\OmegaA$ is a positive (unbounded) operator on $L^2(K \backslash G/K)$.
% or $L^2(K \backslash G \slash K)$.
In particular we are in the setting of \cite{CowlingAnnals} and we will apply a result from \cite{CowlingAnnals} below.
\end{remark}

\section{$L^p$-estimates for radial multipliers}
In this section we prove Theorem \ref{Thm=Lp} which gives a first estimate for the norm of Fourier multipliers in terms of smoothness and regularity of the symbol with respect to the Casimir operator.

 We set
\[
P_K = \int_K \lambda_G(k) d\mu_K(k), \qquad P_K' = \int_K \lambda'_G(k) d\mu_K(k),
\]
as the orthogonal projection of $L^2(G)$ onto $L^2(K \backslash G)$ and $L^2(G \slash K)$, respectively.
 We define
\[
L^\infty(\widehat{ K \backslash G \slash K}) := \{  \lambda_G(f)   \mid f \in L^1(K \backslash G \slash K)  \}'',
\]
which is a von Neumann algebra acting on $L^2(G)$.
As $(G,K)$ forms a Gelfand pair $L^\infty(\widehat{ K \backslash G \slash K})$ is commutative.
Set further
\[
\mathbb{E}_K(x) = P_K x P_K, \qquad  x \in L^\infty(\widehat{G}),
\]
which we view as an operator on $L^2(G)$. Note that for $ f \in L^1(G)$ we have
\[
\mathbb{E}_K(  \lambda_G(f) ) = \lambda_G( \widetilde{f}), \quad \textrm{ where } \quad  \widetilde{f}(g) = \int_K \int_K  f(k_1 g k_2  ) dk_1 dk_2.
\]
and hence we see that $\mathbb{E}_K$ is a normal $\tau_{\widehat{G}}$-preserving conditional expectation of $L^\infty(\widehat{G})$ onto $L^\infty(\widehat{ K \backslash G \slash K})$. We denote $L^p(\widehat{ K \backslash G \slash K})$ for the $L^p$-space constructed from $L^\infty(\widehat{ K \backslash G \slash K})$ with trace $\tau_{\widehat{G}}$. Then $L^p(\widehat{ K \backslash G \slash K})$ is a closed subspace of $L^p(\widehat{G})$. All von Neumann algebras we defined above act on $L^2(G)$ whereas the Casmir operator acts on $L^2(K \backslash G \slash K)$. Through the following lemma we show that it naturally acts on $L^2(G)$ as well.

\begin{lemma}\label{Lem=PolarVNA}
The map $\pi: \lambda_G(f) \mapsto \lambda_G(f) P_K'$ extends to an isomorphism of von Neumann algebras $M_1 \rightarrow M_2$ where
\[
M_1 := L^\infty(\widehat{ K \backslash G \slash K}), \qquad \textrm{ and } \qquad  M_2 =   M_1 P_K'.
\]
\end{lemma}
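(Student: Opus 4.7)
The plan is to break the proof into three short steps: recognize $\pi$ as right multiplication by a projection in $M_1'$, reduce injectivity to statements about $\tau_{\widehat G}$-finite projections via semifiniteness of the Plancherel trace, and then close with an approximate-identity computation in $L^2(G)$. For the first step, observe that $P_K' = \int_K \lambda_G'(k) d\mu_K(k)$ commutes with every left-translation operator $\lambda_G(g)$ and therefore $P_K' \in M_1'$. Right multiplication by a self-adjoint projection in the commutant is automatically a normal $*$-homomorphism, so the prescription $\pi(x) := x P_K'$ extends the formula on generators to a normal $*$-homomorphism from $M_1$ with image $M_1 P_K' = M_2$. The standard reduced/induced von Neumann algebra construction shows $M_2$ is a von Neumann algebra on $P_K' L^2(G) = L^2(G/K)$, and surjectivity onto $M_2$ holds by construction; only injectivity remains.

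For injectivity I would exploit that $M_1$ is abelian, so $\ker \pi$ is a WOT-closed ideal of the form $M_1 p$ for some projection $p \in M_1$. The Plancherel trace $\tau_{\widehat G}$ restricts to a faithful normal semifinite trace on $M_1$: for $f \in L^1(K \backslash G \slash K) \cap L^2(K \backslash G \slash K)$ one has $\tau_{\widehat G}(\lambda_G(f)^* \lambda_G(f)) = \Vert f \Vert_2^2 < \infty$, and such elements generate $M_1$. If $p \neq 0$, semifiniteness yields a nonzero $\tau_{\widehat G}$-finite sub-projection $q \leq p$ still in $\ker \pi$. Any such $q$ sits in $M_1 \cap L^2(\widehat G)$, and via Plancherel combined with the characterization $P_K x P_K = x$ of elements of $M_1$, one can write $q = \lambda_G(h)$ for some $h \in L^2(K \backslash G \slash K)$.

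Finally I would fix an approximate identity $(e_n) \subset C_c(K \backslash G \slash K)$ of $L^1(G)$. Each $e_n$ is right-$K$-invariant, so $P_K' e_n = e_n$, and using the defining property $q \xi = h \ast \xi$ on $\xi \in C_c(G)$ recalled in the preliminaries, we obtain
\[
0 = q P_K' e_n = q e_n = h \ast e_n \to h \quad \text{in } L^2(G),
\]
where the convergence is standard for an $L^1$-approximate identity acting on an $L^2$-function. This forces $h = 0$, contradicting $\tau_{\widehat G}(q) > 0$, so $\ker \pi = 0$ and $\pi$ is the desired isomorphism. The main obstacle is bridging from the concrete generators $\lambda_G(f)$, where the approximate-identity trick is transparent, to arbitrary WOT-limits in $M_1$; the semifinite-trace reduction to a $\tau_{\widehat G}$-finite projection is what performs this bridge, placing the problem back inside $L^2(\widehat G)$, where Plancherel produces the single concrete function $h$ to manipulate.
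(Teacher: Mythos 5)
Your proposal is correct, and its injectivity argument takes a genuinely different route from the paper's. The paper argues spatially through the Cartan decomposition: it pulls functions back along $K \times \exp(\fraka^+) \times K \rightarrow G$ to obtain an isometry $U: L^2(G) \rightarrow L^2(K \times \exp(\fraka^+)) \otimes L^2(K)$, observes that for $x \in M_1$ the operator $UxU^\ast$ commutes with the right $K$-translations in the last tensor leg, and deduces that $xP_K' = 0$ forces $UxU^\ast$ to vanish on $L^2(K \times \exp(\fraka^+)) \otimes 1$ and hence to be zero. You stay entirely on the Plancherel/trace side: normality of $\pi$ and commutativity of $M_1$ give $\ker \pi = M_1 p$; semifiniteness of $\tau_{\widehat{G}}$ restricted to $M_1$ (consistent with the paper's setup, which already uses this restriction to define $L^p(\widehat{K \backslash G \slash K})$, so there is no circularity) produces a nonzero finite-trace projection $q \leq p$, which by the very definition of the Plancherel trace is left convolution by some $h \in L^2$, bi-$K$-invariant because $P_K q P_K = q$; testing $qP_K' = 0$ against right-$K$-invariant test functions then kills $h$. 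Your route is leaner --- it needs neither the Cartan decomposition nor the full-measure statement about its range, and it would work for any compact subgroup of a unimodular group --- while the paper's argument stays within the $K\exp(\fraka^+)K$ picture used throughout the rest of the paper.

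One wording fix is needed in your last step: there is no approximate identity for $L^1(G)$ inside $C_c(K \backslash G \slash K)$ when $K$ is nontrivial, since $f \ast e_n$ is automatically right-$K$-invariant for bi-$K$-invariant $e_n$. Take instead for $(e_n)$ the bi-$K$-averages of a genuine approximate identity, or the normalized indicators $\vert U_i \vert^{-1} 1_{U_i}$ of bi-$K$-invariant neighbourhoods with $\cap_i U_i = K$ as in the proof of Theorem \ref{Thm=Lp}; then $h \ast e_n$ converges in $L^2(G)$ to the right-$K$-average of $h$, which equals $h$ precisely because you have already shown $h$ to be bi-$K$-invariant. With that adjustment the contradiction $h = 0$, hence $q = 0$, goes through unchanged.
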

\begin{proof}
As $P_K'$ commutes with $M_1$ we have that $\pi$ is a normal $\ast$-homomorphism and it remains to show that it is injective. Set $A^+ =    \exp(\fraka^+)$ and consider
\[
\varphi: K \times  A^+   \times K \rightarrow G: (k_1, a, k_2) \mapsto k_1 a k_2.
\]
The complement of the range of $\varphi$ in $G$ has measure 0 \cite[Section xix]{JorgensonLang}; note that the map is generally not injective as $M$ (the centralizer of $A$ in $K$, see Section \ref{Sect=Prelim}) is usually non-trivial.
Then the pullback map
\[
U := \varphi^\ast: L^2(G) \rightarrow L^2(K \times   A^+  \times K)  \simeq L^2(K \times  A^+    ) \otimes L^2(K ).
\]
is isometric by \eqref{Eqn=CartanIntegral} if we equip $K$ with its Haar measure and $A^+$ with the pullback along $\exp: \fraka^+ \rightarrow A^+$ of the measure $\vert K \slash M \vert   \delta(H) dH$ on $\fraka^+$. Let $P_U = U U^\ast$ be the range projection of $U$.  Set 
\[
\widetilde{P}_K = \int_K \lambda_K(k) d\mu_K(k) =  \int_K \lambda'_K(k) d\mu_K(k),
\]
which is the projection of $L^2(K)$ onto the constant functions. 

  We have  $(1 \otimes \lambda_K'(k))  U  =  U \lambda_G'(k)$ for all $k \in K$. It follows that
\[
U P_K' = (1 \otimes \widetilde{P}_K) U \quad \textrm{  and so } \quad (1 \otimes \widetilde{P}_K) P_U =   P_U (1 \otimes \widetilde{P}_K).
\]
 Take  $x \in M_1$. As $x$ commutes with every $\lambda'_G(k), k \in K$ we find that
\begin{equation}\label{Eqn=Inclusion}
U  x U^\ast    \in B(  L^2(K \times    A^+   )   ) \otimes L^\infty(\widehat{K}).
\end{equation}

If for $x \in M_1$ we have that  $x P_K' = 0$ then it follows that
\[
0 = Ux P_K'  = U x U^\ast U P_K'  = U x  U^\ast  (1 \otimes \widetilde{P}_K) U.
% L^2(K \times   A^+   ) \otimes 1 =  U x  U^\ast      L^2(K \times   A^+   ) \otimes 1.
\]
But, multiplying with $U^\ast$ from the right,  this means that
\[
0 = U x  U^\ast (1 \otimes \widetilde{P}_K) P_U = U x  U^\ast P_U (1 \otimes \widetilde{P}_K)  = U x  U^\ast (1 \otimes \widetilde{P}_K).
\]
 So the range of $1 \otimes \widetilde{P}_K$ given by  $L^2(K \times   A^+  ) \otimes \mathbb{C} 1$  is in the kernel of $Ux U^\ast$ and hence, by \eqref{Eqn=Inclusion}, we have   $Ux U^\ast = 0$. This yields that $x = U^\ast U x U^\ast U = 0$.    This shows that $\pi$ is injective.

%But for any $y \in B(  L^2(K \times    A^+  )   ) \otimes L^\infty(\widehat{K})'$ we have that if $y  L^2(K \times   A^+  ) \otimes 1 = 0$ then $y = 0$.
\end{proof}

 It follows in particular that $\pi$ in Lemma \ref{Lem=PolarVNA} restricts to an isomorphism $\pi'$ of $L^\infty(K \backslash G \slash K)$ onto its image. As this image consists of bi-$K$-invariant functions we see that $\pi'(x) =  x P_K P_K'$   for $x \in L^\infty(K \backslash G \slash K)$. Thus the restriction map $\pi_K: x \mapsto x\vert_{L^2(K \backslash G \slash K) }$ yields an isomorphism of $L^\infty(\widehat{K \backslash G \slash K})$ onto its image. Now the Casimir operator $\OmegaA$ is affiliated with $\pi_K(L^\infty(\widehat{K \backslash G \slash K}))$; indeed all its finite spectral projections are in  this von Neumann algebra, see \cite[Eqn. (4.2.1)]{AnkerJi},  \cite{HelgasonSpherical}. There we may see $\OmegaA$ as an operator affiliated with $L^\infty(\widehat{K \backslash G \slash K})$ and thus acting on $L^2(G)$ as well. We note that formally $\OmegaA$ is equal to the  Casimir operator {\it restricted} to the bi-$K$-invariant functions.

The following is now a consequence of Corollary \ref{Cor=L2Case}, the spectral theorem for the Casimir operator \cite[Eqn. (4.2.1)]{AnkerJi} and the Plancherel identity/unitarity of the spherical Fourier transform.

\begin{corollary}\label{Cor=L2CaseCasimir}
For $s > \frac{1}{4} \dim(G \slash K)$ we have that $\OmegaA^{-s}  \in L^{2}(  \widehat{ K \backslash G \slash K }  )$.
\end{corollary}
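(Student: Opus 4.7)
The plan is to identify $\OmegaA^{-s}$, viewed as an operator affiliated with $L^\infty(\widehat{K\backslash G\slash K})$, with the element $\lambda_G(\kappa_s)$ of $L^2(\widehat{G})$, and then use the Plancherel isometry together with Corollary \ref{Cor=L2Case} to compute its $L^2$-norm.

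First, I would recall from \eqref{Eqn=Conv} that $\OmegaA^{-s} f = \kappa_s \ast f$ for $f \in C_c(K\backslash G\slash K)$. Equivalently, on the dense subspace $C_c(K\backslash G\slash K)$ of $L^2(K\backslash G\slash K)$, the bounded operator $\OmegaA^{-s}$ coincides with left convolution by $\kappa_s$, and hence with $\lambda_G(\kappa_s)$ (restricted to bi-$K$-invariant $L^2$-functions). By Corollary \ref{Cor=L2Case}, $\kappa_s \in L^2(K\backslash G\slash K) \subset L^2(G)$, so the Plancherel isometry $\lambda_G : L^2(G) \to L^2(\widehat{G})$ gives a well-defined element $\lambda_G(\kappa_s) \in L^2(\widehat{G})$ with
\[
\tau_{\widehat{G}}\bigl(\lambda_G(\kappa_s)^\ast \lambda_G(\kappa_s)\bigr) = \Vert \kappa_s \Vert_{L^2(G)}^2 < \infty.
\]
Since $\kappa_s$ is bi-$K$-invariant, $\lambda_G(\kappa_s)$ is affiliated with $L^\infty(\widehat{K\backslash G\slash K})$, and thus $\lambda_G(\kappa_s) \in L^2(\widehat{K\backslash G\slash K})$.

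Next I would justify that this $L^2$-vector $\lambda_G(\kappa_s)$ is the same closed operator as $\OmegaA^{-s}$ viewed through the identification $\pi_K$ of the discussion preceding the corollary. This is where the spectral theorem enters: the spherical Fourier transform diagonalises $\OmegaA$ via $\widehat{\OmegaA f}(\lambda) = (\Vert \rho\Vert^2 + \Vert \lambda \Vert^2) \widehat{f}(\lambda)$, and consequently $\OmegaA^{-s}$ is spectrally represented by the multiplier $(\Vert \rho\Vert^2 + \Vert \lambda \Vert^2)^{-s}$. On the other hand, the same multiplier is the spherical Fourier transform of $\kappa_s$ (as recalled in the remark after Corollary \ref{Cor=L2Case}). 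Thus both $\OmegaA^{-s}$ and $\lambda_G(\kappa_s)$ act on the spherical Plancherel side as multiplication by $(\Vert \rho\Vert^2 + \Vert \lambda \Vert^2)^{-s}$, so they coincide as closed operators affiliated with $L^\infty(\widehat{K\backslash G\slash K})$. Combining this identification with the $L^2$-norm computation above yields $\OmegaA^{-s} \in L^2(\widehat{K\backslash G\slash K})$.

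The only mildly delicate point is the identification in the second paragraph: one has to be careful that $\lambda_G(\kappa_s)$, which is a priori only defined by the Plancherel extension from $L^1 \cap L^2$, agrees as an unbounded operator with the spectrally defined $\OmegaA^{-s}$. This is routine given the convolution identity \eqref{Eqn=Conv} on the common core $C_c(K\backslash G\slash K)$ and the essential self-adjointness of $\OmegaA^0$ recalled in Section \ref{Sect=Casimir}, but it is the step where the various constructions must be reconciled.
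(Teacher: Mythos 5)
Your proposal is correct and follows essentially the same route as the paper, which justifies the corollary in one line as a consequence of Corollary \ref{Cor=L2Case}, the spectral theorem for $\OmegaA$ (spherical Fourier transform diagonalisation) and the Plancherel identity; you have simply spelled out the identification of $\OmegaA^{-s}$ with $\lambda_G(\kappa_s)$ and the resulting norm computation $\tau_{\widehat{G}}(\lambda_G(\kappa_s)^\ast\lambda_G(\kappa_s)) = \Vert \kappa_s \Vert_{L^2(G)}^2$ in more detail. The reconciliation step you flag at the end (that the Plancherel-extended $\lambda_G(\kappa_s)$ and the spectrally defined $\OmegaA^{-s}$ agree as operators affiliated with $L^\infty(\widehat{K \backslash G \slash K})$) is exactly what the paper's citation of \cite[Eqn. (4.2.1)]{AnkerJi} and the unitarity of the spherical Fourier transform is meant to cover, so there is no gap.
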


  Now for $s>0$ consider the following completely positive map,
 \begin{equation}\label{Eqn=TMap}
 T_{s} =   (  \OmegaA^{-s} \otimes 1) ( \mathbb{E}_K \otimes \id) \circ \Delta_{\widehat{G}}: L^\infty(\widehat{G}) \rightarrow L^\infty(\widehat{K \backslash G \slash K}) \otimes  L^\infty(\widehat{G}).
 \end{equation}

 We will need vector valued $L^p$-spaces in case the measure space is the underlying measure space of the commutative von Neumann algebra $L^\infty(\widehat{K \backslash G \slash K})$. The following definition is rather explicit for $p =1$ and $p=2$ and all our other results will follow from complex interpolation between these cases. It agrees with the one in Section \ref{Sect=VecVal} if one identifies $L^\infty(\widehat{K \backslash G \slash K})$ with $L^\infty(X, \mu_X)$ for suitable $X$; in fact one may take $(X, \mu_X) = (\fraka, c(\lambda)^{-2} d\lambda)$ where $c$ is the Harish-Chandra $c$-function.

\begin{definition}
 We define the  $L^p(\widehat{G}   )$-valued $L^2$-space  $L^2( \widehat{K \backslash G \slash K};  L^p(\widehat{G}   ))$ as follows.  For $p=1,2$ it is the completion of
 \[
 L^\infty(\widehat{K \backslash G \slash K}) \otimes  L^\infty(\widehat{G}) \cap L^1(\widehat{K \backslash G \slash K}) \otimes  L^1(\widehat{G})
 \]
  with respect to the respective norms:
 \begin{equation}\label{Eqn=MixedNorm}
 \begin{split}
    \Vert y \Vert_{   L^2( \widehat{K \backslash G \slash K};  L^1(\widehat{G}   ))   } = &
      \tau_{\widehat{G}} ( \vert (\id \otimes \tau_{\widehat{G}})(\vert y\vert)  \vert^2)^{\frac{1}{2}}. \\
        \Vert y \Vert_{   L^2( \widehat{K \backslash G \slash K};  L^2(\widehat{G}   ))   } = &
      (\tau_{\widehat{G}}   \otimes \tau_{\widehat{G}}  ) (y^\ast y)^{\frac{1}{2}} = \Vert y \Vert_{   L^2( \widehat{K \backslash G \slash K}) \otimes L^2(\widehat{G}) }.
 \end{split}
 \end{equation}
  Further, through complex interpolation we may isometrically identify
  \[
  L^2( \widehat{K \backslash G \slash K};  L^p(\widehat{G}   )) \simeq [L^2( \widehat{K \backslash G \slash K};  L^1(\widehat{G}   )), L^2( \widehat{K \backslash G \slash K};  L^2(\widehat{G}   ))]_{\theta},
  \]
  where $\theta$ is such that $\frac{1}{p} = \frac{1-\theta}{2} + \frac{\theta}{1}$.
\end{definition}

  Typically we want $S_G$ in the next lemma to be very close to $\frac{1}{4} \dim(G \slash K)$ to get sharp estimates.

\begin{lemma}\label{Lem=TSEstimate}
Let $S_G > \frac{1}{4} \dim(G \slash K)$. Then, for $1 \leq p \leq 2$,
 \begin{equation}\label{Eqn=TS}
 \begin{split}
\Vert  T_{S_G}: L^p(\widehat{G}) \rightarrow  L^2( \widehat{K \backslash G \slash K};    L^p(\widehat{G}   )) \Vert \leq & 2 \Vert \OmegaA^{-S_G} \Vert_{ L^2(\widehat{K \backslash G \slash K}) } < \infty.
 \end{split}
 \end{equation}
 \end{lemma}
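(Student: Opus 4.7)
My plan is to establish the bound at the endpoints $p=1$ and $p=2$ and then invoke complex interpolation, mirroring the defining interpolation for $L^2(\widehat{K\backslash G \slash K}; L^p(\widehat{G}))$. The key algebraic ingredients are: (i) $\Delta_{\widehat{G}}$ is a normal $\ast$-homomorphism; (ii) the left-invariance identity $(\id \otimes \tau_{\widehat{G}}) \Delta_{\widehat{G}}(y) = \tau_{\widehat{G}}(y)\cdot 1$ for the Plancherel trace on the Kac algebra $L^\infty(\widehat{G})$; (iii) the Kadison--Schwarz inequality for the normal $\tau_{\widehat{G}}$-preserving conditional expectation $\mathbb{E}_K$; and (iv) the fact that $\OmegaA^{-S_G}$ is affiliated with the commutative algebra $L^\infty(\widehat{K\backslash G \slash K})$, so that $\OmegaA^{-S_G} \otimes 1$ commutes with every element of $L^\infty(\widehat{K\backslash G \slash K}) \otimes L^\infty(\widehat{G})$, and in particular with the range of $(\mathbb{E}_K \otimes \id)\Delta_{\widehat{G}}$.

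For the $p=2$ endpoint I would work with $x$ in a dense subalgebra such as $\lambda_G(C_c(G) \ast C_c(G))$ and compute
\begin{equation*}
\Vert T_{S_G}(x) \Vert_{L^2 \otimes L^2}^2
= (\tau_{\widehat{G}} \otimes \tau_{\widehat{G}})\bigl( (\OmegaA^{-2S_G} \otimes 1)\, y^\ast y \bigr),
\qquad y := (\mathbb{E}_K \otimes \id)\Delta_{\widehat{G}}(x),
\end{equation*}
using (iv) to commute $\OmegaA^{-2S_G} \otimes 1$ past $y^\ast$. Kadison--Schwarz together with (i) then bounds $y^\ast y \leq (\mathbb{E}_K \otimes \id)\Delta_{\widehat{G}}(x^\ast x)$, and slicing with $\id \otimes \tau_{\widehat{G}}$ on the right factor and invoking (ii) yields
\begin{equation*}
\Vert T_{S_G}(x)\Vert_{L^2 \otimes L^2}^2 \leq \tau_{\widehat{G}}(\OmegaA^{-2S_G}) \, \tau_{\widehat{G}}(x^\ast x) = \Vert \OmegaA^{-S_G}\Vert_{L^2}^2 \Vert x\Vert_{L^2}^2,
\end{equation*}
the finiteness of $\Vert \OmegaA^{-S_G}\Vert_{L^2}$ being exactly Corollary~\ref{Cor=L2CaseCasimir}.

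For the $p=1$ endpoint I first take $x \in L^1(\widehat{G})$ with $x \geq 0$. Positivity of the three factors combined with (iv) gives $T_{S_G}(x) \geq 0$, so $\vert T_{S_G}(x)\vert = T_{S_G}(x)$. Slicing with $\id \otimes \tau_{\widehat{G}}$ and applying (ii) yields
\begin{equation*}
(\id \otimes \tau_{\widehat{G}})(T_{S_G}(x))
= \OmegaA^{-S_G} \, \mathbb{E}_K\bigl((\id \otimes \tau_{\widehat{G}})\Delta_{\widehat{G}}(x)\bigr)
= \tau_{\widehat{G}}(x) \, \OmegaA^{-S_G},
\end{equation*}
whose $L^2(\widehat{K \backslash G \slash K})$-norm equals $\Vert x\Vert_1 \Vert \OmegaA^{-S_G}\Vert_{L^2}$. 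For arbitrary $x$ the decomposition $x = (x_1^+ - x_1^-) + i(x_2^+ - x_2^-)$ into self-adjoint real and imaginary parts and then positive and negative parts produces the announced factor of $2$ via $\Vert x_1\Vert_1 + \Vert x_2\Vert_1 \leq 2\Vert x\Vert_1$, so the $p=1$ endpoint holds with constant $2 \Vert \OmegaA^{-S_G}\Vert_{L^2}$.

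Bilinear complex interpolation between these two endpoints then gives, for $\theta \in [0,1]$ and $\tfrac{1}{p} = \tfrac{1-\theta}{2} + \theta$, the bound $2^\theta \Vert \OmegaA^{-S_G}\Vert_{L^2} \leq 2 \Vert \OmegaA^{-S_G}\Vert_{L^2}$, uniform in $p \in [1,2]$. The principal technical care lies in making the slicing formulas rigorous for unbounded $\OmegaA^{-S_G}$ and applying left-invariance of $\tau_{\widehat{G}}$ at the $L^1$ level; I would handle this by first verifying every identity on $\lambda_G(C_c(G) \ast C_c(G))$ and extending by normality and density.
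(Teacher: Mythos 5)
Your proposal is correct and follows essentially the same route as the paper: both establish the $p=2$ endpoint via Kadison--Schwarz for $\mathbb{E}_K$ and left-invariance of $\tau_{\widehat{G}}$, the $p=1$ endpoint for positive elements via the slice identity $(\id\otimes\tau_{\widehat{G}})\Delta_{\widehat{G}}(x)=\tau_{\widehat{G}}(x)1$ followed by the real/imaginary and positive/negative decomposition giving the factor $2$, and then conclude by complex interpolation, with finiteness coming from Corollary \ref{Cor=L2CaseCasimir}. The only cosmetic difference is that your $p=1$ computation slices directly instead of passing through the squared modulus as in the paper, which is equivalent.
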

 \begin{proof}
 That the right hand side of \eqref{Eqn=TS} is finite follows from Corollary \ref{Cor=L2CaseCasimir}.
 We prove this for $p=1$ and $p=2$ so that the lemma follows from complex interpolation.
For $p=2$ we find for $x \in L^2(\widehat{G}) \cap L^\infty(\widehat{G})$ by the Kadison-Schwarz inequality,
 \[
 \begin{split}
    \Vert T_{S_G}(x) \Vert_{   L^2( \widehat{K \backslash G \slash K};  L^2(\widehat{G}   ))   }  ^2 = & (\tau_{\widehat{G}} \otimes \tau_{\widehat{G}})(T_{S_G}(x)^\ast T_{S_G}(x)) \\
= &  (\tau_{\widehat{G}} \otimes \tau_{\widehat{G}}) \left( (\OmegaA^{-S_G}  \otimes 1)  (\mathbb{E}_K  \otimes \id) (\Delta_{\widehat{G}}(x))^\ast   (\mathbb{E}_K  \otimes \id) (\Delta_{\widehat{G}}(x))       (\OmegaA^{-S_G}  \otimes 1)   \right) \\
 \leq & (\tau_{\widehat{G}} \otimes \tau_{\widehat{G}}) \left( (\OmegaA^{-S_G}  \otimes 1)  (\mathbb{E}_K  \otimes \id) (\Delta_{\widehat{G}}(x^\ast x))        (\OmegaA^{-S_G}  \otimes 1)   \right).
 \end{split}
 \]
  Then by left invariance of the Plancherel trace $\tau_{\widehat{G}}$ on $L^\infty(\widehat{G})$ (see \cite[Definition 1.1]{KustermansVaes}),
  \[
     \Vert T_{S_G}(x) \Vert_{   L^2( \widehat{K \backslash G \slash K};  L^2(\widehat{G}   ))   } ^2 \leq \tau_{\widehat{G}}( \OmegaA^{-2S_G}  \mathbb{E}_K (1)    ) \tau_{\widehat{G}}(x^\ast x) =  \tau_{\widehat{G}}( \OmegaA^{-2S_G }   ) \tau_{\widehat{G}}(x^\ast x) =
  \tau_{\widehat{G}}( \OmegaA^{-2S_G}     ) \Vert x \Vert_{L^2(\widehat{G} )}^2.
  \]
This proves the $L^2$-estimate.

  For the $p=1$ estimate,  take  $x \in L^1(\widehat{G})$  positive so that  $T_{S_G}(x)$ is positive.
   By \eqref{Eqn=MixedNorm} and using left-invariance of $\tau_{\widehat{G}}$ twice,
  \begin{equation}\label{Eqn=L12Est}
  \begin{split}
  \Vert T_{S_G}(x) \Vert_{   L^2( \widehat{K \backslash G \slash K};  L^1(\widehat{G}   ))   }^2 = &
  \tau_{\widehat{G}}\left( \OmegaA^{-S_G} \vert (\mathbb{E}_K \otimes \tau_{\widehat{G}}) \Delta_{\widehat{G}}(x)   \vert^2 \OmegaA^{-S_G} \right)\\
  \leq &
  \tau_{\widehat{G}}\left( \OmegaA^{-S_G}
  \mathbb{E}_K\left(
  (\id \otimes \tau_{\widehat{G}} )   ( \Delta_{\widehat{G}}(x^\ast) )
  (\id \otimes \tau_{\widehat{G}} )   ( \Delta_{\widehat{G}}(x) )
  \right)  \OmegaA^{-S_G} \right)   \\
  = & \tau_{\widehat{G}}\left( \OmegaA^{-S_G} (\mathbb{E}_K \otimes \tau_{\widehat{G}}  )( \Delta(x^\ast)    )    \OmegaA^{-S_G} \right)
   \tau_{\widehat{G}}(x) \\
   = &  \tau_{\widehat{G}}( \OmegaA^{-2S_G})
   \vert \tau_{\widehat{G}}(x)\vert^2 =  \tau_{\widehat{G}}( \OmegaA^{-2S_G}) \Vert x \Vert_1^2.
%    (\tau_{\widehat{G}} \otimes \tau_{\widehat{G}}) \left( ( \OmegaA^{-2S_G} \otimes 1)  (\mathbb{E}_K \otimes \id) \Delta_{\widehat{G}}(  x) \right) \leq
%  \tau_{\widehat{G}}( \OmegaA^{-2S_G}) \tau_{\widehat{G}}(x) = \Vert \OmegaA^{-S_G} \Vert_2^{\frac{1}{2}} \Vert x \Vert_1.
  \end{split}
  \end{equation}
 For general (non-positive) self-adjoint $x \in L^1(\widehat{G})$ we use the fact that it can be written as $x = x_1 - x_2$ with $x_i$ positive and  $\Vert x_1 \Vert_1 + \Vert x_2 \Vert_1  = \Vert x \Vert_1$ to conclude the same estimate \eqref{Eqn=L12Est}. For general $x \in L^1(\widehat{G})$ we use the fact that it can be written as $x = x_1 + ix_2$ with $x_i$ self-adjoint and $\Vert x_i \Vert \leq \Vert x \Vert$ and conclude
  \[
  \Vert T_{S_G}(x) \Vert_{   L^2( \widehat{K \backslash G \slash K};  L^1(\widehat{G}   ))   } \leq 2  \tau_{\widehat{G}}( \OmegaA^{-2S_G})^{\frac{1}{2}} \Vert x \Vert_1.
  \]
  This concludes the proof.
  \end{proof}

\begin{lemma} \label{Lem=Differential}
Let $s > 0$. For $m \in L^1(K \backslash G \slash K) \cap L^2(K \backslash G \slash K)$ in the domain of $\OmegaA^{s}$ such that $\OmegaA^{s}(m) \in L^1(K \backslash G \slash K)$ we have
\[
  \OmegaA^{s} \lambda_G(m) =  \lambda_G( \OmegaA^s (m)),
\]
and in particular for every $\xi \in L^2(G)$ we have that $\lambda_G(m) \xi$ is in the domain of $\OmegaA^{s}$.
\end{lemma}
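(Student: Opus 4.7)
My plan is to exploit the fact that both sides of the desired identity are built from elements of (or operators affiliated with) the \emph{commutative} von Neumann algebra $L^\infty(\widehat{K \backslash G \slash K})$, and to verify the identity after simultaneous diagonalisation via the spherical Plancherel theorem. First, since $m$ and $\OmegaA^s(m)$ lie in $L^1(G)$, Young's convolution inequality yields $\|\lambda_G(f)\|_{B(L^2(G))}\leq\|f\|_1$, so $\lambda_G(m)$ and $\lambda_G(\OmegaA^s(m))$ are bounded operators on $L^2(G)$, and their bi-$K$-invariance places them in $L^\infty(\widehat{K \backslash G \slash K})$. By the discussion following Lemma \ref{Lem=PolarVNA}, $\OmegaA^s$ is also affiliated with this same abelian algebra.

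Next I invoke the spherical Fourier transform, which identifies $L^\infty(\widehat{K \backslash G \slash K})$ with $L^\infty(\fraka^\ast/W,|c(\lambda)|^{-2}d\lambda)$. Under this identification $\OmegaA$ becomes multiplication by $\|\rho\|^2+\|\lambda\|^2$ (see \cite[Eqn.\ (4.2.1)]{AnkerJi}), so $\OmegaA^s$ becomes multiplication by $M_s(\lambda):=(\|\rho\|^2+\|\lambda\|^2)^s$, while $\lambda_G(f)$ for $f\in L^1\cap L^2$ bi-$K$-invariant corresponds to multiplication by the spherical transform $\widehat f$. The hypothesis $m\in\Dom(\OmegaA^s)$ is then precisely the $L^2$-identity $\widehat{\OmegaA^s(m)}(\lambda) = M_s(\lambda)\widehat m(\lambda)$, and the extra assumption $\OmegaA^s(m)\in L^1$ gives, via the elementary bound $\|\widehat f\|_\infty\leq\|f\|_1$, that this common symbol is in $L^\infty$, hence represents a bounded multiplier.

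From here the operator identity is direct: the symbol of the (a priori unbounded) composition $\OmegaA^s\lambda_G(m)$ is $M_s\cdot\widehat m$, which equals $\widehat{\OmegaA^s(m)}$, the symbol of $\lambda_G(\OmegaA^s(m))$. Since this common symbol is in $L^\infty$, the composition $\OmegaA^s\lambda_G(m)$ is in fact bounded on $L^2(G)$, which is exactly the ``in particular'' assertion that $\lambda_G(m)\xi\in\Dom(\OmegaA^s)$ for every $\xi\in L^2(G)$. The main subtle point I will need to address is that $L^\infty(\widehat{K \backslash G \slash K})$ is represented on $L^2(G)$ with multiplicity rather than on its standard spectral space; but since the symbol-level identity is between bounded functions, it lifts unambiguously through the functional calculus of the commutative algebra to give the operator identity on $L^2(G)$, and no further hard analysis is needed.
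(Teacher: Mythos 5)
Your argument is correct, but it takes a genuinely different route from the paper. The paper argues directly on the dense set $C_c(G)$: since $\OmegaA^{s}$ is affiliated with the abelian algebra $L^\infty(\widehat{K \backslash G \slash K})$ it commutes with right convolutions, so $\OmegaA^{s}\lambda_G(m)f = \OmegaA^{s}(m\ast f)=\OmegaA^{s}(m)\ast f=\lambda_G(\OmegaA^{s}(m))f$ for $f\in C_c(G)$; then it observes that $\OmegaA^{s}\lambda_G(m)$ is closed (closed times bounded), that $\lambda_G(\OmegaA^{s}(m))$ is bounded because $\OmegaA^{s}(m)\in L^1$, and concludes equality of the two operators (and the domain statement) from agreement on a dense subspace. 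You instead diagonalise everything at once via the spherical Plancherel transform and verify the identity at the level of symbols, using $\widehat{\OmegaA^{s}(m)}=M_s\widehat m$ and $\|\widehat{\OmegaA^{s}(m)}\|_\infty\le\|\OmegaA^{s}(m)\|_1$. This is more conceptual and makes transparent that the lemma is just multiplication of symbols in a commutative algebra, at the cost of invoking the Gelfand-pair/Plancherel machinery and the faithfulness of the restriction to $L^2(K\backslash G\slash K)$ (which the paper's Lemma \ref{Lem=PolarVNA} and the discussion after it do supply). The one place where your write-up is terser than it should be is the final lifting step: the identity you want is not purely between bounded elements, since $\OmegaA^{s}\lambda_G(m)$ involves the unbounded factor and the ``in particular'' clause is exactly a domain statement. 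To make it airtight you should either quote, or prove, the standard functional-calculus fact that for a self-adjoint operator affiliated with an abelian von Neumann algebra in a normal faithful representation and a bounded element of that algebra, if the product of symbols is essentially bounded then the composition is everywhere defined and equals the corresponding bounded element; the quick proof truncates with spectral projections $E_n$ of $\OmegaA$, notes $\OmegaA^{s}E_n\lambda_G(m)\xi=\lambda_G(\OmegaA^{s}(m))E_n\xi\to\lambda_G(\OmegaA^{s}(m))\xi$, and uses closedness of $\OmegaA^{s}$ --- the same closedness ingredient on which the paper's shorter argument rests.
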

 \begin{proof}
Take $f \in C_c(G)$. As  $\OmegaA^{s}$ is  affiliated with $L^\infty(\widehat{K \backslash G \slash K})$  it commutes with right convolutions and so we have
\[
 \OmegaA^{s} \lambda_G(m) f =  \OmegaA^{s} (m \ast f) = \OmegaA^{s}(m) \ast f = \lambda_G( \OmegaA^s (m) ) f.
\]
Now $\OmegaA^{s} \lambda(m)$ is a closed operator; this follows from the general fact that if $d$ is closed and $x$ is bounded then $dx$ with domain $\{ \xi \mid x \xi \in {\rm Dom}(d)\}$ is closed. The assumption that $\OmegaA^s (m)  \in L^1(K \backslash G \slash K)$ assures that
$\lambda_G( \OmegaA^s (m))$ is bounded. Further, as we have shown that $\lambda_G( \OmegaA^s (m))$  equals $\OmegaA^{s} \lambda_G(m)$  on the domain  $C_c( G)$ it follows that   $\OmegaA^{s} \lambda_G(m) =  \lambda_G( \OmegaA^s (m))$ as operators on $L^2(  G)$.
 \end{proof}

% \begin{remark}\label{Rmk=FunctionalTensor}
% Let $\varphi \in L^p(M)^\ast$. Then automatically $\varphi \otimes \id$ extends to a bounded map $L^p(M) \otimes  L^p(M) \rightarrow  L^p(M)$ with $\Vert \varphi \otimes \id \Vert = %\Vert \varphi \Vert$.\footnote{Proof is easy and only 1 or 2 lines, but I believe this is standard.}
% \end{remark}

%\begin{remark}\label{Rmk=Vee}
%The map $J: L^2(G) \rightarrow L^2(G): f \mapsto f^\vee$ is unitary and the map $x \mapsto J x J$ defines an isometry $L^p(\widehat{G}) \rightarrow  L^p(\widehat{G})$ sending %$\lambda_G(f)$ to $\lambda_G(f^\vee)$. It then follows that $J T_m( J x J ) J  = T_{m^\vee}(x), x \in L^p(\widehat{G})$ and therefore
%\[
%\Vert  T_m: L^p(\widehat{G}) \rightarrow  L^p(\widehat{G}) \Vert = \Vert  T_{m^\vee}: L^p(\widehat{G}) \rightarrow  L^p(\widehat{G}) \Vert.
%\]
%\end{remark}

 \begin{theorem}\label{Thm=Lp}
 Let $1 \leq p \leq 2$.  Let   $m \in L^2(K \backslash G \slash K)   \cap L^\infty(K \backslash G \slash K)$ with $m \in \Dom(\OmegaA^{S_G})$. Then, for $S_G > \frac{1}{4} \dim(G \slash K)$,
 \[
\Vert  T_m: L^p(\widehat{G}) \rightarrow  L^p(\widehat{G}) \Vert \leq 2  \Vert \OmegaA^{S_G}(  m) \Vert_{L^2(G)}   \Vert \OmegaA^{-S_G} \Vert_{L^2(\widehat{K \backslash G \slash K})}.
 \]
 \end{theorem}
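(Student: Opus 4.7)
The plan is to realize the Fourier multiplier $T_m$ as a composition of the "universal" completely positive map $T_{S_G}$ from \eqref{Eqn=TMap} with a scalar-valued functional on the first tensor leg, and then invoke Lemma \ref{Lem=TSEstimate} and Remark \ref{Rmk=VectorFunctional} to read off the norm estimate. The Sobolev character of the bound will come out automatically: the $L^2$-norm of $\OmegaA^{S_G}(m)$ enters as the norm of the functional, while the $L^2$-norm of $\OmegaA^{-S_G}$ enters through the norm of $T_{S_G}$.

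First I would set $\mu := \overline{\OmegaA^{S_G}(m)} \in L^2(K \backslash G \slash K)$; since the Casimir is a real differential operator, $\mu$ has the same $L^2$-norm as $\OmegaA^{S_G}(m)$. By Plancherel, $\lambda_G(\mu) \in L^2(\widehat{K \backslash G \slash K})$, so the formula $\phi(y) := \tau_{\widehat{G}}(\lambda_G(\mu)^{\ast} y)$ defines a bounded linear functional on $L^2(\widehat{K \backslash G \slash K})$ with $\Vert \phi \Vert = \Vert \OmegaA^{S_G}(m) \Vert_{L^2(G)}$.

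The core step is to verify the factorization
\[
T_m(x) = (\phi \otimes \id)\bigl(T_{S_G}(x)\bigr), \qquad x \in \lambda_G(C_c(G) \ast C_c(G)).
\]
For $x = \lambda_G(f)$ we have, weakly, $T_{S_G}(x) = \int_G f(g) \, \OmegaA^{-S_G}(\mathbb{E}_K(\lambda_G(g))) \otimes \lambda_G(g) \, d\mu_G(g)$, so the identity reduces to showing that $\phi(\OmegaA^{-S_G}(\mathbb{E}_K(\lambda_G(g)))) = m(g)$. Using the self-adjointness of $\OmegaA^{-S_G}$ on $L^2(\widehat{K \backslash G \slash K})$ and the fact that $\OmegaA^{-S_G}\lambda_G(\mu) = \lambda_G(\bar m)$ (Lemma \ref{Lem=Differential} applied after a suitable approximation, since $\OmegaA^{-S_G}$ commutes with right convolutions and is realised by the convolution kernel $\kappa_{S_G}$), one rewrites $\phi \circ \OmegaA^{-S_G}$ as $\tau_{\widehat{G}}(\lambda_G(\bar m)^{\ast} \cdot )$. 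Since $\lambda_G(m) \in L^\infty(\widehat{K \backslash G \slash K})$, the bimodule and trace-preservation properties of the conditional expectation $\mathbb{E}_K$ give $\tau_{\widehat{G}}(\lambda_G(\bar m)^{\ast} \mathbb{E}_K(\lambda_G(g))) = \tau_{\widehat{G}}(\lambda_G(\bar m)^{\ast} \lambda_G(g))$, which Plancherel identifies with $m(g)$.

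Once the factorization is established, assembling the estimate is immediate. Remark \ref{Rmk=VectorFunctional} applied to the measure space underlying $L^\infty(\widehat{K \backslash G \slash K})$ (which one may identify with $(\mathfrak{a}, c(\lambda)^{-2}d\lambda)$ through the spherical Plancherel theorem) shows that $\phi \otimes \id$ extends to a bounded map $L^2(\widehat{K \backslash G \slash K}; L^p(\widehat{G})) \to L^p(\widehat{G})$ of norm $\Vert \phi \Vert = \Vert \OmegaA^{S_G}(m) \Vert_{L^2(G)}$. Combining this with Lemma \ref{Lem=TSEstimate} gives
\[
\Vert T_m(x) \Vert_{L^p(\widehat{G})} \leq \Vert \OmegaA^{S_G}(m) \Vert_{L^2(G)} \cdot 2 \Vert \OmegaA^{-S_G} \Vert_{L^2(\widehat{K \backslash G \slash K})} \cdot \Vert x \Vert_{L^p(\widehat{G})}
\]
on the dense subset $\lambda_G(C_c(G) \ast C_c(G))$, and the full estimate follows by continuity. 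The main obstacle will be justifying the identity $\phi \circ \OmegaA^{-S_G}(\mathbb{E}_K(\lambda_G(g))) = m(g)$ rigorously: neither $\lambda_G(g)$ nor $\mathbb{E}_K(\lambda_G(g))$ lies in $L^2(\widehat{G})$, so the argument has to be interpreted weakly (testing against $f \in C_c(G) \ast C_c(G)$) and the interchange of $\OmegaA^{-S_G}$ with the pairing must be justified using the affiliation of $\OmegaA$ with $L^\infty(\widehat{K \backslash G \slash K})$ and the convolution realisation from \eqref{Eqn=Conv}.
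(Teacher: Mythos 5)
Your proposal follows essentially the same route as the paper's proof: your functional $\phi$ is exactly the paper's $\varphi_{\OmegaA^{S_G}(m)}$ viewed on $L^2(\widehat{K \backslash G \slash K})$, the factorization $T_m = (\varphi_{\OmegaA^{S_G}(m)} \otimes \id)\circ T_{S_G}$ is the paper's key identity \eqref{Eqn=TmComp2}, and the final estimate is assembled from Remark \ref{Rmk=VectorFunctional} and Lemma \ref{Lem=TSEstimate} just as in the paper. The one substantive difference is how the factorization is made rigorous, and this is precisely where your argument is still formal. The paper first replaces $m$ by $m_i = m \ast I_i$ with $I_i = \vert U_i \vert^{-1} 1_{U_i}$, so that $m_i$ and $\OmegaA^{S_G}(m_i) = \OmegaA^{S_G}(m) \ast I_i$ lie in the Fourier algebra $A(G)$; then $\varphi_{m_i}$ is a normal functional on $L^\infty(\widehat{G})$, the identity $T_{m_i} = (\varphi_{m_i}\otimes\id)\circ\Delta_{\widehat{G}}$ is immediate, the Casimir power is moved onto the symbol via Lemma \ref{Lem=Differential} (giving $\varphi_{m_i}\circ\OmegaA^{S_G} = \varphi_{\OmegaA^{S_G}(m_i)}$ on the relevant elements), one inserts $(\OmegaA^{S_G}\otimes 1)(\OmegaA^{-S_G}\otimes 1)=1$, and only at the very end one lets $i\to\infty$. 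This mollification is exactly the device that resolves the obstacle you flag at the end: without it, the interchange of $\phi\otimes\id$ with the weak integral $\int_G f(g)\,\lambda_G(g)\otimes\lambda_G(g)\,d\mu_G(g)$ and the pointwise identity $\phi(\OmegaA^{-S_G}\mathbb{E}_K(\lambda_G(g))) = m(g)$ are not justified as stated (the integrand is not Bochner integrable in $L^2(\widehat{K\backslash G\slash K}; L^p(\widehat{G}))$, so Remark \ref{Rmk=VectorFunctional} cannot simply be pulled through the integral).

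Two further slips to repair. First, the assertion that $\lambda_G(m)\in L^\infty(\widehat{K\backslash G\slash K})$ is unjustified: $m\in L^2\cap L^\infty(G)$ does not make $\lambda_G(m)$ a bounded convolution operator, and the hypothesis of the theorem does not give this; fortunately it is also unnecessary. Second, after moving $\OmegaA^{-S_G}$ onto the functional you arrive at the pairing $\tau_{\widehat{G}}(\lambda_G(\bar m)^\ast\, \mathbb{E}_K(\lambda_G(g)))$, which is not defined by the trace (an $L^2$-element times a bounded operator need not be trace class), so this step too must be interpreted against test functions $f\in C_c(G)\ast C_c(G)$ — or, more simply, one runs the whole argument through the paper's approximation $m_i = m\ast I_i$, after which your computation becomes literally the paper's proof and the estimate for $m$ follows by taking limits in $i$.
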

 \begin{proof}
%  We first argue that  without loss of generality  we may assume that    $\lambda_G(m )  \in L^1(\widehat{G})$.
 Let $(U_i)_i$ be shrinking bi-$K$-invariant neighbourhoods of the identity of $G$ such that $\cap_i U_i = K$ and $U_i^{-1} = U_i$.
  Let $m_i =    m \ast I_i$ where $I_i = \vert U_i \vert^{-1} 1_{U_i}$ is an $L^1(G)$-normalisation of the indicator function on $U_i$. As
  \begin{equation}\label{Eqn=FourierAlgebra}
  \OmegaA^{S_G}(m_i) =
\OmegaA^{S_G}(m \ast I_i) = \OmegaA^{S_G}(m)  \ast I_i,
\end{equation}
  it follows that $m_i$ satisfies the same assumptions as made on $m$ in the statement of the proposition. Suppose that we have proved the proposition for $m_i$ then by taking limits in $i$ it also follows for $m$. Now $m_i$ has the additional property that it is contained in the   Fourier algebra $A(G)$ of $G$ (see Eymard \cite{Eymard}) meaning that
\begin{equation}\label{Eqn=VarphiM}
\varphi_{m_i}: \lambda_G(f) \mapsto \int_G m_i(g) f(g) d\mu_G(g),
\end{equation}
extends to a normal bounded functional on $L^\infty(\widehat{G})$. The equation \eqref{Eqn=FourierAlgebra} similarly shows that $\OmegaA^{S_G}(m_i) \in A(G)$ and hence $\varphi_{\OmegaA^{S_G}(m_i)}$ may also be defined as a normal bounded functional on $L^\infty(\widehat{G})$ by replacing $m_i$ by $\OmegaA^{S_G}(m_i)$ in \eqref{Eqn=VarphiM}.
We first derive a number of properties for our setup that shall be used in the core of our proof.

\vspace{0.3cm}

\noindent (1) Note that as $m_i$ is bi-$K$-invariant,  for $f \in L^1(G)$ we have, using the left and right invariance of the Haar measure,
 \[
 \begin{split}
 \varphi_{m_i}(\mathbb{E}_K( \lambda_G(f)) ) = &  \int_K \int_K \int_G m_i(g) f(k_1 g k_2) d\mu_G(g) d\mu_K(k_1) d \mu_K(k_2)\\ = &
 \int_G m_i( g  ) f(g) d\mu_G(g)
  =
 %\int_G m_i(g) f(g) dg dk_1 dk_2 =
 \varphi_{m_i}( \lambda_G(f) ).
  \end{split}
 \]
So $\varphi_{m_i} \circ \mathbb{E}_K = \varphi_{m_i}$.

\noindent (2) We have by Lemma \ref{Lem=Differential} for $f \in L^1(K \backslash G \slash K) \cap L^2(K \backslash G \slash K)$ in the domain of $\OmegaA^{S_G}$ such that $\OmegaA^{S_G}(f) \in L^1(K \backslash G \slash K)$ that
 \[
 \begin{split}
 \varphi_{m_i}( \OmegaA^{S_G} \lambda_G(f)  ) = &
 \varphi_{m_i}( \lambda_G(  \OmegaA^{S_G} f)  ) =
 \int_G m_i( g  ) (\OmegaA^{S_G}  f)(g) d\mu_G(g) \\
  = &
 \int_G  (\OmegaA^{S_G} m_i)( g  )  f(g) d\mu_G(g)   =
 %\int_G m_i(g) f(g) dg dk_1 dk_2 =
 \varphi_{\OmegaA^{S_G} (m_i)} ( \lambda_G(f)  ).
  \end{split}
 \]
In particular, $\lambda_G(f) \mapsto \varphi_{m_i}( \OmegaA^{S_G} \lambda_G(f)  )$ extends to a normal map on $L^\infty(\widehat{K \slash G \backslash K})$.

\noindent (3) As $m_i \in L^2(K \backslash G \slash K)$ it follows directly from \eqref{Eqn=FourierAlgebra} that
\begin{equation}\label{Eqn=L2VecEst}
 \Vert \varphi_{   \OmegaA^{S_G} (m_i)  } \Vert_{L^2(   \widehat{K \backslash G \slash K}  )^\ast}  \leq \Vert \OmegaA^{S_G} (m_i) \Vert_{  L^2(G) }.
 \end{equation}

\vspace{0.3cm}

% which is explicitly given by $\varphi_{m_i}(x) = \langle x m, I_i \rangle$.

% Further, by the Plancherel identity,
% \begin{equation}\label{Eqn=PlancherelPairing}
% \tau_{\widehat{G}}( \lambda_G(h) \lambda_G(m)) = \int_G h(g) m^\vee(g) dg, \qquad h \in L^2(G),
% \end{equation}
% and consequently, as $m \in L^1(G)$ and $\lambda_G(m ) \in L^1(\widehat{G})$ the equality \eqref{Eqn=PlancherelPairing} also holds for $h \in L^1(G)$.
We now come to the main part of the proof. Now take $f \in C_c(G)^{\ast 2}$. Then,
 \begin{equation}\label{Eqn=TmComp}
 \begin{split}
 T_{m_i}(\lambda_G(f)) = &  \int_G m_i(g) f(g) \lambda_G(g)  d\mu_G(g)
 =    (\varphi_{m_i}  \otimes \id) \left(  \int_G f(g) \lambda_G(g) \otimes \lambda_G(g)  d\mu_G(g) \right) \\
 = & (  \varphi_{m_i}   \otimes \id) \circ \Delta_{\widehat{G}}(\lambda_G(f))
 =  (  \varphi_{m_i} \circ \mathbb{E}_K   \otimes \id) \circ \Delta_{\widehat{G}}(\lambda_G(f)).
 % \\
 % = &  (\tau(  \: \cdot \: \lambda_G(\OmegaA^{s}  m^\vee) ) \otimes \id)   ((1+\Delta)^{-s}  \otimes 1) \Delta_{\widehat{G}}(\lambda_G(f)) \\
 % = & (\langle \: \cdot \:,  \lambda_G(\OmegaA^{s}  m^\vee)\rangle_{p, p'} \otimes \id) T_s(\lambda_G(f)).
  \end{split}
 \end{equation}
   Note   that   $(\OmegaA^{S_G} \otimes 1) (\OmegaA^{-S_G} \otimes 1)$ equals the unit of $L^\infty(K \backslash G \slash K)$, in particular with equality of domains.
 Therefore we get, and this is the most crucial equality in this paper,
 \begin{equation}\label{Eqn=TmComp2}
 \begin{split}
T_{m_i} =  &   (  \varphi_{m_i}  \otimes \id) \left( (\OmegaA^{S_G} \otimes 1) (\OmegaA^{-S_G} \otimes 1) (\mathbb{E}_K \otimes \id)  \Delta_{\widehat{G}}  \right) \\
= & (   \varphi_{\OmegaA^{S_G} (m_i)}  \otimes \id) \left(   (\OmegaA^{-S_G} \otimes 1) (\mathbb{E}_K \otimes \id)  \Delta_{\widehat{G}}  \right)
=  (  \varphi_{\OmegaA^{S_G} (m_i)}  \otimes \id) \circ T_{S_G}.
 \end{split}
 \end{equation}
% Now for $x \in L^2(\widehat{G})$ we have by the Hausdorff-Young inequality \cite{Cooney}, \cite{CaspersLpf},
% \[
%\vert ( \lambda_G(\OmegaA^{s} ( m^\vee)) \tau_{\widehat{G}} )(x) \vert = \vert \tau_{\widehat{G}}(x \lambda_G(\OmegaA^{s} ( m^\vee))) \vert    \leq \Vert x \Vert_{L^2(\widehat{G})} \Vert %\lambda_G(\OmegaA^{s} ( m^\vee)) \Vert_{L^2(\widehat{G})} \leq \Vert x \Vert_{L^2(\widehat{G})} \Vert  \OmegaA^{S_G} ( m^\vee)  \Vert_{L^2(G)}.
% \]
It follows by this equation, Remark \ref{Rmk=VectorFunctional}, \eqref{Eqn=L2VecEst} and Lemma \ref{Lem=TSEstimate} that
\[
\begin{split}
\Vert  T_{m_i}: L^p(\widehat{G}) \rightarrow  L^p(\widehat{G}) \Vert \leq & \Vert \varphi_{  \OmegaA^{S_G} (m_i)  } \Vert_{L^2(\widehat{K \backslash G \slash K})^\ast }
 \Vert T_{S_G}: L^p(\widehat{G}) \rightarrow  L^2(\widehat{K \backslash G \slash K}  ;   L^p(\widehat{G}) )    \Vert \\
 \leq & 2 \Vert  \OmegaA^{S_G} (m_i) \Vert_{L^2(G)} \Vert \OmegaA^{-S_G} \Vert_{ L^2(\widehat{K \backslash G \slash K}) }.
\end{split}
\]
The theorem now follows by taking limits in $i$ as justified in the beginning of the proof.
 \end{proof}

\section{Interpolation between $L^p$ and $L^2$ and conclusion of the main theorem}\label{Sect=Interpolation}

The result in this section should be seen as a complex interpolation result between the estimate from Theorem \ref{Thm=Lp} and the bound obtained in Corollary \ref{Cor=CasimirBound} that followed from the analysis by Anker and Ji \cite{AnkerJi}.  Similar results can be found in the literature (see \cite{Triebel}, \cite{GrafakosIMRN}) but we have not found a theorem that was directly applicable and therefore we provide a self-contained proof. % This proof follows the lines of \cite{GrafakosIMRN}  but simplifies in our situation due to the absence of  Littlewood-Payley theory.
 We use the following variation of the three lines lemma which can be found in \cite{GrafakosClassical} or \cite{Hirschman}. We define the usual strip
\[
\mathcal{S} = \{ z \in \mathbb{C} \mid 0 \leq \Im(z) \leq 1 \}.
\]

\begin{lemma}\label{Lem=ThreeLines}
Let $F: \mathcal{S} \rightarrow \mathbb{C}$ be continuous and analytic on the interior of $\mathcal{S}$. Assume that  for every $0 \leq \beta \leq 1$ there exists a function $A_\beta: \mathbb{R} \rightarrow \mathbb{R}_{> 0}$ and scalars $A >0$ and $0<a<\pi$ such that such that for all $t \in \mathbb{R}$ we have
\[
F(\beta + it) \leq A_\tau(t) \leq e^{   A e^{a \vert t \vert} }.
\]
Then for $0 < \beta < 1$ we have $\vert F(\beta) \vert \leq e^{D_\beta}$ where
\[
D_\beta = \frac{\sin(\pi \theta)}{2} \int_{-\infty}^\infty \frac{\log(\vert A_0(t) \vert) }{ \cosh(\pi t) - \cos(\pi \beta) } + \frac{\log \vert A_1(t) \vert}{ \cosh(\pi t) + \cos(\pi \beta) } dt.
\]
\end{lemma}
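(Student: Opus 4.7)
The statement is a Hirschman-type refinement of the Hadamard three-lines theorem, so the plan is to realise $\log|F(\beta)|$ via the Poisson integral representation for the strip $\mathcal{S}$ applied to the subharmonic function $\log|F|$, and then insert the boundary hypothesis $|F(\beta+it)|\le A_\beta(t)$ to obtain $|F(\beta)|\le e^{D_\beta}$. Throughout the argument I read $\mathcal{S}$ as the vertical strip $\{0\le\Re z\le 1\}$ (so that the formula $F(\beta+it)$ makes sense with $\beta=\Re z$) and interpret the $\theta$ in the definition of $D_\beta$ as a misprint for $\beta$.

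The first ingredient is the explicit Poisson kernel of the strip at an interior real point $\beta\in(0,1)$. I would obtain it by pulling back the Poisson kernel of the upper half-plane via the conformal map $z\mapsto e^{i\pi z}$, which sends $\mathcal{S}$ biholomorphically onto the upper half-plane and the boundary lines $\{\Re z=0\}$ and $\{\Re z=1\}$ onto the positive and negative real axis respectively. A direct computation yields harmonic-measure densities
\[
P_\beta^{(0)}(t)=\frac{\sin(\pi\beta)}{2(\cosh(\pi t)-\cos(\pi\beta))},\qquad P_\beta^{(1)}(t)=\frac{\sin(\pi\beta)}{2(\cosh(\pi t)+\cos(\pi\beta))},
\]
on the two boundary lines, which are precisely the weights appearing in $D_\beta$.

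The main technical step is to justify the Poisson representation
\[
\log|F(\beta)|\le\int_{-\infty}^{\infty}P_\beta^{(0)}(t)\log|F(it)|\,dt+\int_{-\infty}^{\infty}P_\beta^{(1)}(t)\log|F(1+it)|\,dt
\]
in the presence of the double-exponential growth $A_\beta(t)\le\exp(Ae^{a|t|})$. Here I would use a Phragm\'en--Lindel\"of regularisation: pick any $b\in(a,\pi)$ and set
\[
E_\epsilon(z)=\exp\bigl(-\epsilon\,(\cos(bz)+\cos(b(1-z)))\bigr),
\]
or any similar entire function whose real part dominates a constant multiple of $e^{b|t|}$ uniformly on each boundary line. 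Since $b<\pi$, the real part of $\cos(bz)+\cos(b(1-z))$ stays nonnegative across the whole strip, and since $b>a$ the product $F\cdot E_\epsilon$ is bounded on $\mathcal{S}$ and tends to $0$ as $|\Im z|\to\infty$. The classical Poisson representation for bounded subharmonic functions on the strip then applies to $\log|F\cdot E_\epsilon|$; inserting the boundary bound $|F(\beta+it)|\le A_\beta(t)$ and using that $|E_\epsilon|\le 1$ on the boundary gives the inequality above with an $o(1)$ correction as $\epsilon\to 0$. Dominated convergence in $\epsilon$ is legitimate because the Poisson kernels decay like $e^{-\pi|t|}$, which absorbs $\log A_\beta(t)\le Ae^{a|t|}$ thanks to $a<\pi$. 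Passing to the limit gives exactly $\log|F(\beta)|\le D_\beta$.

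The main obstacle is the regularisation step: both the construction of $E_\epsilon$ and the verification that $F\cdot E_\epsilon$ is bounded on the closed strip require using the hypothesis $a<\pi$ in a sharp way, because $e^{-\pi|t|}$ is precisely the decay rate of the Poisson kernel of $\mathcal{S}$ along vertical lines. Once the regularised representation is established, taking $\epsilon\to 0$ is routine, and the conclusion of the lemma is immediate.
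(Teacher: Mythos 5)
Your proposal is correct: the paper does not prove this lemma itself but quotes it from Hirschman and Grafakos, and your argument (Poisson kernel of the strip obtained from the conformal map $z\mapsto e^{i\pi z}$, plus a Phragm\'en--Lindel\"of regularization $E_\epsilon$ whose admissibility uses $a<b<\pi$ exactly where the kernel decay $e^{-\pi|t|}$ is needed) is essentially the standard proof found in those references. You also correctly repair the statement's typos ($\theta$ versus $\beta$, the strip being $0\le\Re z\le 1$, and $|F(\beta+it)|\le A_\beta(t)$); the only cosmetic simplification is that one can bound $\log|FE_\epsilon|\le\log A_\beta$ directly on the boundary and let $\epsilon\to 0$ only in the factor $E_\epsilon(\beta)$, avoiding any dominated-convergence discussion for the boundary integrals.
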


To apply this lemma it is crucial to realize that
\[
\frac{\sin(\pi \theta)}{2} \int_{-\infty}^\infty \frac{ dt  }{ \cosh(\pi t) - \cos(\pi \beta) } = 1 - \theta, \qquad
 \frac{\sin(\pi \theta)}{2} \int_{-\infty}^\infty  \frac{ dt }{ \cosh(\pi t) + \cos(\pi \beta) }.
\]

The following is now the main theorem of this paper.

\begin{theorem}\label{Thm=Main}
Let $ \frac{1}{4} \dim(G/K) < S_G <  \frac{1}{2} \dim(G/K)$.
Let $p \in (1,\infty)$.  Let $s \in (0,  S_G]$    be such that
\[
\left| \frac{1}{p} - \frac{1}{2} \right| < \frac{s}{2 S_G}.
\]
Then, there exists a constant $C_{G, s,p} >0 $ only depending on the group $G$ and the exponents $s$ and $p$, such that for every $m \in L^2(K \backslash G \slash K) \cap L^\infty(K \backslash G \slash K)$ with $m \in \Dom(\OmegaA^s)$ and $\OmegaA^s(m) \in L^{2S_G/s}(G)$ we have,
\begin{equation}\label{Eqn=MainTheorem}
\Vert T_m: L^p(\widehat{G}) \rightarrow  L^p(\widehat{G}) \Vert \leq C_{G, s,p} \Vert \OmegaA^s(m) \Vert_{L^{2S_G/s}(G)}.
\end{equation}
\end{theorem}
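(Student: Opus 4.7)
The plan is a Stein-type complex interpolation between the $L^{p_1}$-multiplier bound of Theorem \ref{Thm=Lp} at $s = S_G$ (which controls $\|T_m\|_{L^{p_1}\to L^{p_1}}$ by $\|\OmegaA^{S_G} m\|_{L^2}$) and the trivial Plancherel identity $\|T_m\|_{L^2\to L^2} = \|m\|_{L^\infty(G)}$, the latter being controlled by $\|\OmegaA^s(m)\|_{L^{2S_G/s}}$ via Corollary \ref{Cor=CasimirBound}. The analytic backbone is Lemma \ref{Lem=ThreeLines}. By duality, since $T_m^{\ast} = T_{\overline{m}}$ and $\OmegaA^s$ commutes with complex conjugation, it suffices to treat $1 < p \leq 2$, so that $0 \leq 1/p - 1/2 < s/(2S_G)$. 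Arguing as at the start of the proof of Theorem \ref{Thm=Lp} (convolution with a shrinking bi-$K$-invariant approximate identity), one first reduces to $\phi := \OmegaA^s(m)$ bi-$K$-invariant, smooth and compactly supported, hence $\phi \in L^2 \cap L^q \cap L^\infty$ where $q := 2S_G/s$.

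Set $\theta_0 := s/S_G \in (0,1]$ and pick $p_1 \in (1,2]$ via $1/p_1 - 1/2 = (S_G/s)(1/p - 1/2)$; the strict hypothesis guarantees $p_1 > 1$. For $z$ in the strip $\mathcal{S}$ introduce the Stein analytic families
\[
\psi_z \;=\; \frac{\phi}{|\phi|}\,|\phi|^{qz/2}, \qquad m_z \;=\; \OmegaA^{-z S_G}(\psi_z),
\]
together with $x_z = (x/|x|)|x|^{p/p(z)}$ and $y_z = (y/|y|)|y|^{p'/p'(z)}$ where $1/p(z) = (1-z)/2 + z/p_1$. These satisfy $m_{\theta_0} = \OmegaA^{-s}\phi = m$, $x_{\theta_0} = x$, $y_{\theta_0} = y$ and $p(\theta_0) = p$; moreover $\|\psi_{it}\|_\infty \leq 1$ and $\|\psi_{1+it}\|_{L^2} = \|\phi\|_q^{q/2}$. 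Consider the scalar function $F(z) = \tau_{\widehat G}(y_z^{\ast}\, T_{m_z}(x_z))$, which is analytic on the interior of $\mathcal{S}$ and of admissible growth (thanks to the smoothness and compact support of $\phi$).

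On $\Re z = 1$, the key identity $\OmegaA^{S_G}(m_{1+it}) = \OmegaA^{-i S_G t}(\psi_{1+it})$ together with $L^2$-unitarity of the imaginary power gives $\|\OmegaA^{S_G}(m_{1+it})\|_{L^2} = \|\psi_{1+it}\|_{L^2} = \|\phi\|_q^{q/2}$; Theorem \ref{Thm=Lp} and H\"older then yield $|F(1+it)| \leq C_1\|\phi\|_q^{q/2}\|x\|_p\|y\|_{p'}$. On $\Re z = 0$, $\|T_{m_{it}}\|_{L^2\to L^2} = \|m_{it}\|_{L^\infty(G)}$, and by Remark \ref{Rmk=CowlingApplies} the Heat semigroup $e^{-t\OmegaA}$ is contractive symmetric submarkovian, so Cowling's theorem on imaginary powers of its generator provides polynomial-in-$t$ control $\|m_{it}\|_\infty \leq C(1+|t|)^a$, hence $|F(it)| \leq C_0(1+|t|)^a\|x\|_p\|y\|_{p'}$. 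Normalising $\|x\|_p = \|y\|_{p'} = 1$ and invoking Lemma \ref{Lem=ThreeLines} at $\beta = \theta_0$ (the polynomial factor is absorbed by the finite integral of $\log(1+|t|)$ against the Poisson-type kernel) yields $|F(\theta_0)| \leq C_{G,s,p}\|\phi\|_q^{\theta_0 q/2} = C_{G,s,p}\|\phi\|_q$, since $\theta_0 q/2 = (s/S_G)(2S_G/s)/2 = 1$. Taking the supremum over unit-norm $x,y$ and then passing to the limit in the approximation of $m$ delivers the theorem. The main obstacle is rigorously producing the $\Re z = 0$ bound, i.e., extracting polynomial-in-$t$ control of $\|\OmegaA^{-iS_G t}\psi_{it}\|_{L^\infty(G)}$ from Cowling's functional calculus on the bi-$K$-invariant scale, together with verifying analyticity of $F$ and the required sub-exponential growth across the whole strip.
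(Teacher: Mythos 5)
Your overall architecture (Stein analytic family built from powers of $\vert\OmegaA^s(m)\vert$, endpoint $\Re z=1$ handled by Theorem \ref{Thm=Lp} via the identity $\OmegaA^{S_G}(m_{1+it})=\OmegaA^{-itS_G}(\psi_{1+it})$ and $L^2$-unitarity of imaginary powers, endpoint $\Re z=0$ handled by Plancherel, then the three-lines Lemma \ref{Lem=ThreeLines}) matches the paper's strategy. But there is a genuine gap exactly where you flag ``the main obstacle'': your family $m_z=\OmegaA^{-zS_G}(\psi_z)$ carries \emph{no} smoothing at the left edge, so at $\Re z=0$ you must bound $\Vert T_{m_{it}}\Vert_{L^2\to L^2}=\Vert \OmegaA^{-itS_G}\psi_{it}\Vert_{L^\infty(G)}$, and nothing in the paper's toolkit (nor in general) delivers this: Cowling's theorem \cite[Corollary 1]{CowlingAnnals}, invoked through Remark \ref{Rmk=CowlingApplies}, gives polynomial-in-$t$ bounds for $\OmegaA^{it}$ only on $L^{q}$ with $1<q<\infty$; purely imaginary powers of such generators are in general unbounded on $L^\infty$, and $\Vert\psi_{it}\Vert_\infty\le 1$ does not help. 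So the $\Re z=0$ estimate, as written, has no proof, and smoothness/compact support of $\phi$ (itself only sketched) does not rescue uniformity in $t$.

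The paper avoids this precisely by spending the strict inequality $\vert\frac1p-\frac12\vert<\frac{s}{2S_G}$: it does not interpolate the Casimir exponent between $0$ and $S_G$ at parameter $\theta_0=s/S_G$, but chooses $\theta<s/S_G$, $p_1=\frac{2}{\alpha+1}$ and an auxiliary exponent $s_0\in(0,s)$ with $s=(1-\theta)s_0+\theta S_G$, so that the analytic family is $M_z=\OmegaA^{-(1-z)s_0-zS_G}\bigl(m_s\vert m_s\vert^{\frac{-s+(1-z)s_0+zS_G}{s}}\bigr)$. Then at every $\Re z=\beta$ the symbol contains a genuine negative power $\OmegaA^{-s_\beta}$ with $s_\beta\ge s_0>0$, which by Corollary \ref{Cor=CasimirBound} (the Anker--Ji kernel estimates) maps $L^{q_\beta}\to L^\infty$ with $q_\beta=2S_G/s_\beta$ finite, and the imaginary factor $\OmegaA^{it(s_0-s_1)}$ is controlled by Cowling \emph{on $L^{q_\beta}$}, a reflexive space, with the admissible $(1+\vert t\vert^3\log^2\vert t\vert)$-type growth. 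This yields both the uniform strip bound and the $\Re z=0$ estimate that your construction lacks, and the exponent bookkeeping $(1-\theta)\frac{s_0}{s}+\theta\frac{S_G}{s}=1$ still produces the single factor $\Vert\OmegaA^s(m)\Vert_{L^{2S_G/s}}$ at $z=\theta$. To repair your argument you would have to introduce the same slack (replace $\OmegaA^{-zS_G}$ by $\OmegaA^{-(1-z)s_0-zS_G}$ and adjust $\psi_z$ accordingly); with your current family the left endpoint fails.
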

\begin{proof}
For completeness we mention that for $p=2$ this result is Theorem \ref{Thm=Lp} in combination with Corollary \ref{Cor=L2Case}; of course for $p=2$ the estimate in the theorem is very crude as $\Vert m \Vert_{L^\infty(G)}$ is the norm of $T_m$ in  \eqref{Eqn=MainTheorem}.
Assume $p \not = 2$. By duality it suffices to treat the case $p \in (1,2)$.
Take $\alpha \in (0,1)$ such that
\[
\frac{1}{p} - \frac{1}{2} < \frac{\alpha  s}{2 S_G}.
\]
Set $p_1 = \frac{2}{\alpha +1}$ and $s_1 = S_G$.
%%%%%%%%%%%%%%%%%%%%%%%%%%
% Then by Theorem \ref{Thm=Lp} and   Corollary \ref{Cor=L2Case} there exists a constant $C_{p_1} >0$ such that
%\begin{equation}\label{Eqn=TmP}
%\Vert T_m: L^{p_1}(\widehat{G}) \rightarrow L^{p_1}(\widehat{G}) \Vert \leq C_{p_1} \Vert \OmegaA^{ S_G}(m) \Vert_{L^{2}(  G ) }.
% \end{equation}
%%%%%%%%%%%%%%%%%%%%%%%%%%
Set $p_0 = 2$. Set
\begin{equation}\label{Eqn=SGEstimate}
\theta := \left(\frac{1}{p} - \frac{1}{2} \right) \frac{2}{\alpha} < \frac{ s}{S_G} \leq 1.
\end{equation}
Hence $\theta \in [0,1]$ and further $\frac{1}{p} = \frac{1-\theta}{2} + \frac{\theta}{p_1}$; in particular $p_1 < p < 2$. In \eqref{Eqn=SGEstimate} we have already noted that $\theta S_G < s \leq S_G$. Therefore we may pick $s_0 \in (0, s)$ such that
\[
s = (1-\theta) s_0 + \theta S_G = (1-\theta) s_0 + \theta s_1.
\]
Note that   
\begin{equation}\label{Eqn=FixEstimate}
s_0 < s \leq S_G < \frac{1}{2} \dim(G \slash K) \textrm{ and } s_1 =S_G < \frac{1}{2} \dim(G \slash K).
\end{equation}
The idea of the rest of the proof is to interpolate between $(p_0,s_0)$ and $(p_1, s_1)$ by means of Lemma \ref{Lem=ThreeLines}.

\vspace{0.3cm}

\noindent {\it Step 1: Defining the function $F$.}
Recall that $s$ was fixed in the statement of the theorem and set,
\[
m_s = \OmegaA^s(m) \in L^{2S_G/s}(K \backslash G \slash K).
\]
For $z \in \mathcal{S}$ set
\[
M_z = \OmegaA^{-(1-z) s_0 - z s_1 }( m_s \vert m_s \vert^{\frac{-s +  (1-z) s_0 + z s_1 }{s} }  );
\]
we need to argue how the application of  $\OmegaA^{-(1-z) s_0 - z s_1 }$ is interpreted, and we shall do that in Step 1a where we show that it is a bounded operator from
$L^{q_\beta}$ to $L^\infty$ (notation below) and at the same time we show that $M_z$ is a function in $L^\infty(G)$.  At this point we observe already that
\begin{equation}\label{Eqn=Mtheta}
M_\theta =  \OmegaA^{- s }( m_s  ) = m.
\end{equation}
 Let $p', p_0'$ and $p_1'$ be the conjugate exponents of respectively $p, p_0$ and $p_1$.
Take $f_1 \in C_c(G)^{\ast 2}$ and set $f = f_1^\ast \ast f_1$.  Similarly, take $g_1 \in C_c(G)$ and set $g = g_1^\ast \ast g_1 \ast \ldots \ast g_1^\ast \ast g_1$ with $k\in \mathbb{N}$ occurrences of $g_1^\ast \ast g_1$ where $k \geq \frac{p'}{p_1'}$. Set $a = \lambda_G(f), b \in \lambda_G(g)$ which are positive and
 contained in $L^\infty(\widehat{G}) \cap L^1(\widehat{G})$. Our assumptions moreover imply that $a^z \in L^\infty(\widehat{G}) \cap L^2(\widehat{G})$ as long as  $\Re(z) \geq \frac{1}{2}$. Further, $b^z \in L^\infty(\widehat{G}) \cap L^2(\widehat{G})$ as long as  $\Re(z) \geq \frac{1}{2k}$. So surely all complex powers of $a$ and $b$ in the expression \eqref{Eqn=Fz} below are contained in $L^\infty(\widehat{G}) \cap L^2(\widehat{G})$. Further, the application of $T_{M_z}$  in \eqref{Eqn=Fz} is justified as it is a bounded map on  $L^2(\widehat{G})$.
So we define,
\begin{equation}\label{Eqn=Fz}
 F(z) =  \tau_{\widehat{G}}( T_{M_z}(  a^{(1-z) \frac{p}{p_0} + z \frac{p}{p_1} } )    b^{  (1-z) \frac{p'}{p_0'} + z \frac{p'}{p_1'} }     ).
\end{equation}
Then $F$ is continuous on $\mathcal{S}$ and analytic on the interior of $\mathcal{S}$.  We now require 3 estimates on $F$.

\vspace{0.3cm}

\noindent {\it Step 1a: Estimating $F$ on the strip $\mathcal{S}$.}
 For any $z \in \mathcal{S}$ we have,
\[
\vert F(z) \vert \leq \Vert M_z \Vert_{L^\infty(G)} \Vert  a^{(1-z) \frac{p}{p_0} + z \frac{p}{p_1} }  \Vert_{L^2(\widehat{G})}  \Vert  b^{  (1-z) \frac{p'}{p_0'} + z \frac{p'}{p_1'} }  \Vert_{L^2(\widehat{G})}.
\]
Here the terms
\[
\Vert  a^{(1-z) \frac{p}{p_0} + z \frac{p}{p_1} }  \Vert_{L^2(\widehat{G})}, \quad \textrm{ and } \quad \Vert  b^{  (1-z) \frac{p'}{p_0'} + z \frac{p'}{p_1'} }  \Vert_{L^2(\widehat{G})},
\]
 are uniformly bounded in $z \in \mathcal{S}$. Now write $z = \beta + it, \beta \in [0,1], t \in \mathbb{R}$. Set $s_\beta = (1-\beta)s_0 + \beta s_1$ and then  $q_\beta = 2S_G/s_\beta$. Set $q = 2S_G/s$. So $s = s_\theta$ and $q = q_\theta$. By \eqref{Eqn=FixEstimate} we have $s_\beta < \frac{1}{2} \dim(G \slash K)$. 
 %{\color{blue}  In particular $s_0, s_1, p_0$ and $p_1$ agree with our definitions fixed at the beginning of this proof ??This is false??}.
   We estimate,
\begin{equation}\label{Eqn=StripEstimate}
\begin{split}
\Vert M_z \Vert_{L^\infty(G)}  =  & \Vert \OmegaA^{-s_\beta +it (s_0 - s_1)}( \OmegaA^s(m)  \vert\OmegaA^s(m)\vert^{\frac{-s +  (1-z) s_0 + z s_1 }{s} }  ) \Vert_{L^\infty(G)}  \\
\leq & \Vert \OmegaA^{- s_\beta}: L^{q_\beta}(K \backslash G \slash K) \rightarrow L^\infty(K \backslash G \slash K) \Vert \\
& \:  \times \:  \Vert \OmegaA^{ it (s_0 - s_1) }: L^{q_\beta}(K \backslash G \slash K) \rightarrow L^{q_\beta}(K \backslash G \slash K) \Vert \Vert  \vert\OmegaA^s(m)\vert^{ \frac{s_\beta}{s } }  \Vert_{ L^{q_\beta}(K \backslash G \slash K) }.
\end{split}
\end{equation}
By Corollary \ref{Cor=CasimirBound} and using that $q_\beta = 2S_G/s_\beta > \dim(G \slash K)/2 s_\beta$, we have
\begin{equation}\label{Eqn=ConvEstimate}
C_{\beta}  := \Vert \OmegaA^{- s_\beta}: L^{q_\beta}(K \backslash G \slash K) \rightarrow L^\infty(K \backslash G \slash K) \Vert < \infty.
\end{equation}
By Remark \ref{Rmk=CowlingApplies} the Heat semi-group is a contractive semi-group with positive generator and hence falls within the setting of \cite{CowlingAnnals}. By \cite[Corollary 1]{CowlingAnnals} there exists a constant $C_\beta' > 0$ only depending on $\beta$ such that
\begin{equation}\label{Eqn=CowlingEstimate}
 \Vert \OmegaA^{ it (s_0 - s_1) }: L^{q_\beta}(K \backslash G \slash K) \rightarrow L^{q_\beta}(K \backslash G \slash K) \Vert
 \leq C_\beta' (1 + \vert t \vert^3 \log^2(\vert t \vert) )^{\vert \frac{1}{q_\beta} - \frac{1}{2} \vert}.
\end{equation}
Finally note that
\begin{equation}\label{Eqn=SwitchingSpace}
\Vert  \vert\OmegaA^s(m)\vert^{ \frac{s_\beta}{s } }  \Vert_{ L^{q_\beta}(K \backslash G \slash K) } =
\Vert   \OmegaA^s(m)  \Vert_{ L^{q}(K \backslash G \slash K) }^{\frac{q}{q_\beta}}.
\end{equation}
Combining \eqref{Eqn=StripEstimate} with \eqref{Eqn=ConvEstimate},  \eqref{Eqn=CowlingEstimate} and \eqref{Eqn=SwitchingSpace} yields
\begin{equation}\label{Eqn=MasterEstimate}
\begin{split}
\vert F(z) \vert \leq   &   C_\beta C_\beta' (1 + \vert t \vert^3 \log^2(\vert t \vert) )^{\vert \frac{1}{q_\beta} - \frac{1}{2} \vert}  \Vert   \OmegaA^s(m)  \Vert_{ L^{q}(K \backslash G \slash K) }^{\frac{q}{q_\beta}} \\
& \: \times \:  \Vert  a^{(1-z) \frac{p}{p_0} + z \frac{p}{p_1} }  \Vert_{L^2(\widehat{G})}  \Vert  b^{  (1-z) \frac{p'}{p_0'} + z \frac{p'}{p_1'} }  \Vert_{L^2(\widehat{G})}.
\end{split}
\end{equation}
We see that for any $z \in \mathcal{S}$ we have  $\vert F(z) \vert \leq e^{A e^{B t}}$ for suitable constants $A >0$ and $0 < B < \pi$.

\vspace{0.3cm}

\noindent {\it Step 1b: Estimating $F$ on  $i \mathbb{R}$.}
By \eqref{Eqn=MasterEstimate} and recalling that $p_0=p_0'=2$ we have in particular that
\[
\begin{split}
 \vert F(it) \vert \leq  &
    C_\beta C_\beta' (1 + \vert t \vert^3 \log^2(\vert t \vert) )^{\vert \frac{1}{q_\beta} - \frac{1}{2} \vert}  \Vert \OmegaA^s(m)     \Vert_{ L^{q}(K \backslash G \slash K) }^{\frac{q}{q_0}}  \Vert  a  \Vert_{L^p(\widehat{G})}^{\frac{ p }{2}}  \Vert  b   \Vert_{L^{p'}(\widehat{G})}^{\frac{ p' }{2}}.
\end{split}
\]

\vspace{0.3cm}

\noindent {\it Step 1c: Estimating $F$ on  $1+ i \mathbb{R}$.} We apply  Theorem \ref{Thm=Lp} to the symbol $M_{1 +it}$. This is possible as we have $m_s = \OmegaA^s(m) \in L^{2S_G/s}(K \backslash G \slash K)$ and therefore, recalling that $s_1 = S_G$,
\[
 m_s \vert m_s \vert^{\frac{-s   - it s_0 + (1 + it) s_1 }{s} } =  m_s \vert m_s \vert^{-1 + \frac{  it(s_1 - s_0)  + s_1 }{s} } \in L^{2}(K \backslash G \slash K).
\]
So that $M_{1 + it} = \OmegaA^{- it s_0 - (1+it) s_1 } (    m_s \vert m_s \vert^{-1 + \frac{  it(s_1 - s_0)  + s_1 }{s} }   )$ lies in $L^{2}(K \backslash G \slash K)$ as negative powers of $\OmegaA$ are bounded operators. Further in Step 1a we already justified that $M_{1+it}$  also lies in $L^{\infty}(K \backslash G \slash K)$. Hence we can apply  Theorem \ref{Thm=Lp}. Together with  Corollary \ref{Cor=L2CaseCasimir} it gives that there exists a constant $C_G > 0$ such that
\[
\begin{split}
 \vert F(1+it) \vert \leq  &  C_G \Vert \OmegaA^{s_1} ( M_{1+it} ) \Vert_{L^{2}( G)}  \Vert    a^{ -it \frac{p}{p_0} + (1+it) \frac{p}{p_1} }  \Vert_{L^{p_1}(\widehat{G})}  \Vert   b^{  it \frac{p'}{p_0'} + (1+it) \frac{p'}{p_1'} }  \Vert_{L^{p_1'}(\widehat{G})} \\
 = & C_G \Vert \OmegaA^{s_1} ( M_{1+it} ) \Vert_{L^{2}(G)}  \Vert    a \Vert_{L^{p}(\widehat{G})}^{  \frac{p}{p_1} }   \Vert   b  \Vert_{L^{p'}(\widehat{G})}^{    \frac{p'}{p_1'} }.
\end{split}
\]
Further, recalling that $s_1 = S_G$,
\[
\begin{split}
\Vert \OmegaA^{s_1} ( M_{1+it} ) \Vert_{L^{2}( G)}  = &
\Vert
\OmegaA^{  it ( s_1 - s_0)  }(  \OmegaA^s(m)  \vert\OmegaA^s(m)\vert^{-1 +  \frac{  it(s_1 - s_0)  + s_1 }{s}}  )  \Vert_{L^{2}(G) }   \\
= & \Vert      \vert\OmegaA^s(m)\vert^{ \frac{  s_1}{s}    } )  \Vert_{L^{2}( G )}
=   \Vert   \OmegaA^s(m)  \Vert_{L^{\frac{2S_G}{s}}( G )}^{\frac{S_G}{s}} =   \Vert   \OmegaA^s(m)  \Vert_{L^{q }( G )}^{\frac{q}{2}}.
\end{split}
\]

\vspace{0.3cm}

\noindent {\it Step 2: Remainder of the proof.} We apply Lemma \ref{Lem=ThreeLines}. The assumptions are met by Steps 1a, 1b and 1c. Further we find that
\[
\begin{split}
 \frac{2}{\sin(\pi \beta)} D_\beta \leq & \int_{-\infty}^\infty \frac{F_0(t)  +
   \frac{q}{q_0} \log(\Vert \OmegaA^s(m)     \Vert_{ L^{q}( G ) }) + \frac{p}{2} \log(   \Vert  a  \Vert_{L^p(\widehat{G})}) + \frac{p'}{2} \log(   \Vert  b   \Vert_{L^{p'}\widehat{G})} )
   }{ \cosh(\pi t) - \cos(\pi \beta) } dt \\
   &  +
\int_{-\infty}^\infty \frac{
   \frac{q}{2} \log(   \Vert   \OmegaA^s(m)  \Vert_{L^{q }(G)}  ) +    \frac{p}{p_1}   \log(\Vert    a \Vert_{L^{p}(\widehat{G})})  +  \frac{p'}{p'_1} \log( \Vert   b  \Vert_{L^{p'}(\widehat{G} )}   )
   }{ \cosh(\pi t) + \cos(\pi \beta) } dt
\end{split}
\]
where
\[
F_0(t) = \log(C_\beta C_\beta') + \left|  \frac{1}{q_\beta} - \frac{1}{2} \right|  \log(    1 + \vert t \vert^3 \log^2(\vert t \vert)       ).
\]
Recall from  \eqref{Eqn=Mtheta} that $M_\theta = m$. It follows by the remarks after Lemma \ref{Lem=ThreeLines} that for some constant  $C_{G, p, p_0,p_1} >0$ we have
\[
\tau_{\widehat{G}}( T_{m}(  a  )     b      ) =
\vert F(\theta) \vert \leq \exp(D_\theta) \leq C_{G, p, p_0,p_1} \Vert \OmegaA^s(m)     \Vert_{ L^{q}(  G  ) }  \Vert  a  \Vert_{L^p(\widehat{G})}
 \Vert  b   \Vert_{L^{p'}\widehat{G})}.
\]
Since this holds for all possible $a$ and $b$ as defined in the beginning of the proof a density argument concludes the proof.
\end{proof}

\section{Examples: multipliers with slow decay}\label{Sect=Examples}

The aim of this section is to illustrate that Theorem \ref{Thm=Main} provides new examples of Fourier multipliers on a wide class of Lie groups.
 To the knowledge of the author the only examples of $L^p$-multipliers on classes of semi-simple Lie groups come from the two papers \cite{PRS}, \cite{Tablate}.
 The method of proof \cite{PRS}, \cite{Tablate} is very effective to find bounds of $L^p$-multipliers for symbols $m$ that are supported on some neighbourhood of the identity of $G$. Then in \cite{PRS} a patching argument \cite[Proof of Theorem A]{PRS} is used to provide bounds of sums of translates of such multipliers. Due to this patching argument, or a simple and crude triangle inequality, the norms grow with the $L^1$-norm of such a multiplier. Hence these multipliers have a local behavior. The method was   improved upon in \cite{Tablate} yielding also multipliers without such an integrability property. Here we show that for $p$ closer to 2 even less conditions are needed and we get multipliers with an even slower decay rate, see Remark \ref{Rmk=Examples}.

\vspace{0.3cm}

For $f: \fraka \rightarrow [0,1]$ a $C^\infty$-function that is invariant under the action of the Weyl group  we define
\[
(\Psi f)( k_1 \exp(H) k_2 ) = f( H   ), \qquad H \in \fraka, k_1, k_2 \in K.
\]
The Weyl group invariance assures that this function is well-defined. Then $\Psi f$ is a bi-$K$-invariant smooth function on $G = K \exp(\fraka) K$.

 \begin{theorem}\label{Thm=Example}
 Let $ \frac{1}{4} \dim(G \slash K) < S_G < \frac{1}{2}\dim(G \slash K)$,  $s \in \mathbb{N}_{\geq 1} \cap [1, S_G]$ and $p \in (1, \infty)$ with $\vert \frac{1}{2} - \frac{1}{p} \vert < \frac{s}{2 S_G}$. Let $A_{dec} > \frac{ s \Vert \rho \Vert }{ S_G}$ and let $f: \fraka \rightarrow [0,1]$ be a $C^\infty$-function that is invariant under the Weyl group such that
  \begin{equation}\label{Eqn=fDecay}
 f( H ) = e^{-A_{dec} \Vert H \Vert}, \qquad \textrm{ in case } H \in \fraka, \Vert H \Vert \geq 1.
 \end{equation}
 Then
\begin{equation}\label{Eqn=Bound}
\Vert T_{\Psi f}: L^p(  \widehat{G} ) \rightarrow L^p(  \widehat{G} ) \Vert    < \infty.
\end{equation}
\end{theorem}
\begin{proof}
 By \cite[Proposition II.3.9]{HelgasonSpherical} there  exists a second order Weyl group invariant linear diffential operator $\mathcal{D}$ acting on $C^\infty( \fraka )$ such that $\OmegaA (\Psi f)   = \Psi ( \mathcal{D}  f)$ and therefore $\OmegaA^s(\Psi f) =  \Psi ( \mathcal{D}^s  f)$. We emphasize that $\mathcal{D}$  contains  differential operators of lower order as well.
  Then, by the explicit form of $f$ in \eqref{Eqn=fDecay} we have for $H \in \fraka$ with $\Vert H \Vert > 1$ that $\vert (\mathcal{D}^s f)(H) \vert \leq  D_s   e^{-A_{dec} \Vert H \Vert}$ for some constant $D_s >  0$. It follows that,
\begin{equation}\label{Eqn=LpComp}
\Vert \OmegaA^s (\Psi f) \Vert_{L^{2 S_G/s}(G) } = \Vert  \Psi ( \mathcal{D}^s  f)  \Vert_{L^{2 S_G / s}(G) } \leq
 D_s \vert K \slash M \vert
\left( \int_{\fraka^+}   e^{-\Vert H \Vert A_{dec} 2 S_G/s} \delta(H) dH \right)^{\frac{s}{2S_G}}.
\end{equation}
As by \eqref{Eqn=CartanIntegral2} we have  $\delta(H) \leq e^{-2\Vert H \Vert \Vert \rho \Vert}$ we see that this integral is finite by our choice of $A_{dec}$.
We now apply  Theorem \ref{Thm=Main}. We cannot do this directly as $\Psi f$ is not in $L^2(G)$ but we can use an approximation. Let $B_R := \{ H \in \fraka \mid \Vert H \Vert < R \}$ where $R>1$ and let $1_{B_R}$ be the indicator function on $B_R$. Set $f_R = f 1_R$. Let $\varphi_i: G  \rightarrow [0, \infty), i \in I$ be a net of symmetric bi-$K$-invariant $C^\infty$-functions with compact supports shrinking to $K$ and normalized by $\Vert \varphi_i \Vert_{L^1(G)} = 1$. Set $f_{R, i} = (\Psi f_R) \ast \varphi_i$.  Then  $f_{R,i} \in L^2(G)$ as $f_{R,i}$ has compact support and further $f_{R,i} \rightarrow \Psi f$ uniformly   as $R \rightarrow \infty$ and taking the limit in $i \in I$. Further, as $\OmegaA$ is a left-invariant differential operator we have $\OmegaA^s(    f_{R,i} ) = (\Psi f_R) \ast  \OmegaA^s( \varphi_i )$ and so certainly $f_{R,i} \in \Dom( \OmegaA^s)$ and $\OmegaA^s(f_{R,i}) \in L^{2 S_G/s}(G)$ as $\OmegaA^s(f_{R,i})$   has compact support.  It follows from  Theorem \ref{Thm=Main} that there is a constant $C_{G, s,p} >0$ such that
\begin{equation}\label{Eqn=Bound}
\begin{split}
\Vert T_{\Psi f}: L^p(  \widehat{G} ) \rightarrow L^p(  \widehat{G} ) \Vert \leq  &
\limsup_{i \in I} \limsup_{R \rightarrow \infty} \Vert T_{ f_{R,i} }: L^p(  \widehat{G} ) \rightarrow L^p(  \widehat{G} ) \Vert \\
\leq &
 C_{G, s, p} \limsup_{i \in I} \limsup_{R \rightarrow \infty}  \Vert \Omega_K^s(   f_{R,i}  )  \Vert_{L^{2 S_G/s}(G) }. \\
%  \leq &
%  C_{G, s, p}   \Vert \Omega_K^s( \Psi  f )  \Vert_{L^{2 S_G/s}(G) }  < \infty.
\end{split}
\end{equation}
We show that for each individual $i \in I$ the limsup over $R \rightarrow \infty$ of the latter expression is bounded by  $\Vert \OmegaA^s(   \Psi f  )  \Vert_{L^{2 S_G/s}(G) }$. Indeed, let $U_i$ be the symmetric compact support of $\varphi_i$ which is left and right $K$-invariant. The support of $\Omega^s(\varphi_i)$ is then still contained in $U_i$.  Let $S_R = \{ H \in \fraka \mid \Vert H \Vert = R\}$ and set $V_{R, i} := K \exp(S_R) K U_i = K \exp(S_R) U_i  \subseteq G$. Also set $V_{R,i}^+ = V_{R,i} U_i$.
 We have that,
\[
\begin{split}
\Vert \Omega^s( \Psi f_R \ast \varphi_i ) \Vert_{L^p(G)} \leq &
\Vert \Omega^s( \Psi f_R \ast \varphi_i ) 1_{G \backslash V_{R,i}} \Vert_{L^p(G)}  + \Vert \Omega^s( \Psi f_R \ast \varphi_i ) 1_{  V_{R,i}} \Vert_{L^p(G)} \\
 = &
\Vert (\Omega^s( \Psi f ) \ast \varphi_i ) 1_{G \backslash V_{R,i}} \Vert_{L^p(G)}  + \Vert ( (\Psi f_R) 1_{V_{R,i}^+} \ast  \Omega^s( \varphi_i ) ) 1_{  V_{R,i}}  \Vert_{L^p(G)} \\
\leq &
\Vert \Omega^s( \Psi f ) \Vert_{L^p(G) } \Vert \varphi_i \Vert_{L^1(G)}    + \Vert  (\Psi f_R) 1_{V_{R,i}^+} \Vert_{L^p(G)}  \Vert \Omega^s( \varphi_i )     \Vert_{L^1(G)} \\
\leq & \Vert \Omega^s( \Psi f ) \Vert_{L^p(G) }     + \Vert  (\Psi f) 1_{V_{R,i}^+} \Vert_{L^p(G)}  \Vert \Omega^s( \varphi_i )     \Vert_{L^1(G)}.
\end{split}
\]
Note that $\Psi f$ is in $L^p(G)$ and therefore it follows that $\lim_{R \rightarrow \infty} \Vert  (\Psi f) 1_{V_{R,i}^+} \Vert_{L^p(G)} = 0$ and so
\[
\limsup_{R \rightarrow \infty }\Vert \Omega^s( \Psi f_R \ast \varphi_i ) \Vert_{L^p(G)} \leq \Vert \Omega^s( \Psi f ) \Vert_{L^p(G) }.
\]
It follows that \eqref{Eqn=Bound} is finite.
 \end{proof}

\begin{remark}
In Theorem \ref{Thm=Example} the closer $p$ is to 2, the smaller we may take $s \in \mathbb{N}_{\geq 1}  \cap [1, S_G]$ and the slower the decay rate $A_{dec}$ of the symbol $m := \Psi f$ becomes.
\end{remark}

\begin{remark}
Suppose that $G = {\rm SL}(n, \mathbb{R})$ so that $K = {\rm SO}(n, \mathbb{R})$ and $A$ consists of diagonal matrices with trace 1. Then $\fraka$ are the diagonal matrices with trace 0. We have that
\[
\dim(G \slash K) = \frac{1}{2} n (n+1) - 1.
\]
And we recall that we typically chose $S_G = \frac{1}{4} \dim(G \slash K) + \epsilon$ for some $\epsilon > 0$ small. The Killing form is given by $\langle X, Y \rangle = 2n {\rm Tr}(XY)$ and $\rho = \sum_{1 \leq i < j \leq n}  \alpha_{i,j}$ where $\alpha_{i,j}(H) = H_{i} - H_{j}$ for $H = {\rm Diag}(H_1, \ldots, H_n) \in \fraka$  the diagonal trace 0 matrix with diagonal entries $H_i \in \mathbb{R}$. Then  $\alpha_{i,j} \in \fraka^\ast$ is identified with $\frac{1}{2n}(E_{ii} - E_{jj}) \in \fraka$ through the Killing form.
 Therefore,
\[
\Vert \rho \Vert^2 = \langle \rho, \rho \rangle =   \frac{1}{n} \left( \sum_{k=1}^n (n-k)^2 - \sum_{k=1}^n (n-k) (k-1)  \right) \approx
\frac{1}{n} \int_0^n  (n-x)(n-2x+1) dx =
\frac{1}{6} n (n+3).
\]
Therefore, for $n$ large enough
\[
\frac{\Vert \rho \Vert^2}{\dim(G \slash K)^2} \approx \frac{\frac{1}{6} n (n+3) }{ \frac{1}{4} n^2 (n+1)^2 } \approx \frac{2}{3 n^2}.
\]
%Everywhere we wrote $\approx$ we actually have $\geq$ as a strict inequality (so not up to a constant).
Therefore, take $\frac{1}{4}  \dim(G \slash K) < S_G <  \frac{1}{2}  \dim(G \slash K)$,  $s \in \mathbb{N}_{\geq 1} \cap [1, S_G]$  and
 \[
 A_{dec} >  \frac{s \Vert \rho \Vert}{ S_G} \qquad \left( \approx 4 \sqrt{ \frac{2}{3} } \frac{s}{n} \textrm{ for } n \textrm{ large and } S_G \searrow \frac{1}{4}  \dim(G \slash K)\right),
     \]
 We have that for $f: \fraka \rightarrow \mathbb{C}$ smooth with
\[
 f( H ) = e^{- A_{dec}    \Vert H \Vert}, \qquad \textrm{ in case }    \Vert H \Vert  \geq 1.
\]
that $\Psi f$ is the symbol of an $L^p$-Fourier multiplier for $\vert \frac{1}{2} - \frac{1}{p} \vert < \frac{s}{2 S_G}$.
%Note that for $\Vert H \Vert_{HS} = {\rm Tr}(H^2)^{\frac{1}{2}}$ the usual Hilbert-Schmidt norm we have $ \sqrt{n} \Vert H \Vert_{HS} = \Vert H \Vert$.
% and $\Vert H \Vert_\infty \leq \Vert H \Vert_{HS} \leq \sqrt{n} \Vert H \Vert_\infty$ where $\Vert H \Vert_\infty = \max \vert H_i \vert$ for $H = {\rm Diag}(H_1, \ldots, H_n)$.
\end{remark}

\begin{remark}\label{Rmk=Examples}
Let us argue that the multipliers we have constructed here for $p$ close to 2 are new compared to what is known from the results in \cite{PRS} and \cite{Tablate}.
Remark \cite[Remark 3.8]{PRS} excludes the symbol $m = \Psi f$ we have constructed from the class of multipliers obtained in \cite[Theorem A]{PRS} as \cite[Remark 3.8]{PRS} implies that the symbols are integrable. Our symbols are not necessarily integrable as can easily be seen from \eqref{Eqn=CartanIntegral}. The multipliers we construct here are also out of reach of the theorem \cite[Theorem A2]{Tablate}. Indeed, assume $G = {\rm SL}(n, \mathbb{R}), n \geq 3$; if $n =2$ \cite[Theorem A2]{Tablate} is not applicable in the first place as the discussion following that theorem shows. Then for $H = {\rm Diag}(H_1, \ldots, H_n) \in \fraka$ set $\Vert H \Vert_\infty = \max_{1 \leq i \leq n} \vert H_i \vert$ and $\Vert H \Vert_2 = {\rm Tr}(H^2)^{\frac{1}{2}} = (2n)^{- \frac{1}{2}} \Vert H \Vert$. We have (see \cite{Tablate} for the adjoint representation and its norm),
\[
\Vert {\rm Ad}_{\exp(H)} \Vert \geq \exp(  \Vert H \Vert_\infty) \geq  \exp( n^{-\frac{1}{2}}  \Vert H \Vert_{2} ) \geq   \exp( 2^{-1/2} n^{-1}  \Vert H \Vert ).
\]
So that \cite[Equation following Theorem A2]{Tablate} yields that
\[
\vert (\Psi f)(\exp( H )) \vert \preceq \Vert {\rm Ad}_{\exp(H)} \Vert^{-d_G} \leq   \exp(-  2^{-1/2} d_G n^{-1}  \Vert H \Vert ),
\]
where $d_G = \lfloor n^2/4 \rfloor$ (see \cite[Example 3.14]{CJKM}, \cite{Maucourant}).
So as $d_G$ increases as $n$ increases we see that the decay of these multipliers is faster than in our examples.  Of course \cite{Tablate} considers symbols that are multipliers in the full range $p \in (1,\infty)$ whereas our methods yield the sharper estimates only when $p$ approximates 2.
\end{remark}

\end{document}